\newtheorem{thm}{Theorem}
\newtheorem{lem}{Lemma}
\newtheorem{prop}{Proposition}
\title{Numerical scheme for Erd\'elyi--Kober fractional diffusion equation using Galerkin--Hermite method}
\author{\L ukasz P\l ociniczak\thanks{Faculty of Pure and Applied Mathematics, Wroc{\l}aw University of Science and Technology, Wyb. Wyspia{\'n}skiego 27, 50-370 Wroc{\l}aw, Poland},
	Mateusz \'Swita{\l}a\footnotemark[1]$\;^,$\footnote{\underline{Corresponding Author}, e-mail: mateusz.switala@pwr.edu.pl}}
\date{}
\begin{document}
\maketitle

\begin{abstract}
The aim of this work is to devise and analyse an accurate numerical scheme to solve Erd\'elyi--Kober fractional diffusion equation. This solution can be thought as the marginal \emph{pdf} of the stochastic process called the \emph{generalized grey Brownian motion} (ggBm). The ggBm includes some well-known stochastic processes: Brownian motion, fractional Brownian motion and grey Brownian motion. To obtain convergent numerical scheme we transform the fractional diffusion equation into its weak form and apply the discretization of the Erd\'elyi--Kober fractional derivative. We prove the stability of the solution of the semi-discrete problem and its convergence to the exact solution. Due to the singular in time term appearing in the main equation the proposed method converges slower than first order. Finally, we provide the numerical analysis of the full-discrete problem using orthogonal expansion in terms of Hermite functions.
\\
	
\noindent\textbf{Keywords}: anomalous diffusion, Erd\'elyi--Kober derivative, Galerkin--Hermite method   \\

\noindent\textbf{MSC2020 Classification}: 35K15, 65M60, 35R11  
\end{abstract}
\section{Introduction}
Fractional calculus is a branch of mathematics that is widely applied in other areas of science. Due to the property of nonlocality, fractional models remarkably well describe many natural phenomena, where for instance, some memory effects appear. Fractional operators are have been extensively analysed, both analytically and numerically. Some thorough expositions can be found for example in \cite{kiryakova1993,podlubny1998}. In the literature \cite{sun2018new} one can find a broad variety of applications of fractional models to real-world phenomena. Probably one of the most known example is the problem of anomalous diffusion where the use of fractional operators to describe this phenomenon accurately has met a large success \cite{metzler2000, mainardi2007, de2011, plociniczak2019derivation, plociniczak2015analytical}. In this work we focus on Erd\'elyi--Kober fractional operators that also are useful in certain physical situations \cite{sneddon1975, plociniczak2014approximation}. We consider two operators from this family: integral $I_\eta^{\gamma,\mu}$, and derivative $D_\eta^{\gamma,\mu}$ with parameters $\eta$, $\gamma$, $\mu$ to be defined in the next section. The Erd\'elyi--Kober fractional derivative also appears in the literature where the deterministic fractional diffusion equation describing marginal density of the certain stochastic processes is considered \cite{pagnini2012}. For the properties of the Erd\'elyi--Kober fractional operators the reader is invited to consult \cite{kilbas2006, kiryakova1993} and \cite{luchko2007} where the Caputo type modification of the differential operator has been considered. Moreover, in \cite{ibrahim2007,wang2012} one can find results concerning existence and uniqueness for integral equations with Erd\'elyi--Kober fractional operators.

The main motivation of our work is the Erd\'elyi--Kober fractional diffusion equation investigated in \cite{pagnini2012}
\begin{equation}\label{Eq:Main}
    \frac{\partial u}{\partial t}=\frac{\alpha}{\beta}t^{\alpha-1}D_{\alpha/\beta}^{\beta-1,1-\beta}\frac{\partial^2 u}{\partial x^2}, \quad (t,x)\in (0,T]\times \mathbb{R},
\end{equation}
with the initial condition $u(0,x)=u_0(x)$. Here, $0<\beta\le1,\ 0<\alpha<2$ and 
\begin{equation}
    D_{\alpha/\beta}^{\beta-1,1-\beta}:=[(\beta-1)+1+\beta/\alpha t]I_{\alpha/\beta}^{0,\beta},
\end{equation} 
is a Erd\'elyi--Kober differential operator. There exist a strong connection between solution of the above fractional integro-differential equation and stochastic processes. In particular, A. Mura \cite{mura2008} originally introduced the following integro-differential equation
\begin{equation}\label{Eq:Mura}
    u(t,x)=u_0(x)+\frac{\alpha}{\Gamma(\beta+1)}\int\limits_0^t \tau^{\alpha/\beta-1}(t^{\alpha/\beta}-\tau^{\alpha/\beta})^{\beta-1}\frac{\partial^2 u(\tau,x)}{\partial x^2}\, d \tau,
\end{equation}
where $u(t,x)$ is the one-point one-time density function of particle dispersion of a generalized grey Brownian motion (ggBm) with $0<\alpha<2,\ 0<\beta\le 1$. The detailed discussion on the meaning of \eqref{Eq:Mura} and the associated family of stochastic processes, denoted by ggBm, the reader can find in  \cite{mainardi2010Mura, mura2008, mura2009, mura2008character, mura2008non}. Choosing appropriate values for the parameters of ggBm, i.e. $\alpha,\ \beta$,  we can recover some well-known stochastic processes \cite{mura2008}: 
\begin{itemize}
    \item $\alpha=\beta=1$: standard Brownian motion,
    \item  $0<\alpha<2,\ \beta=1$: fractional Brownian motion,
    \item $\alpha=\beta$: grey Brownian motion.
\end{itemize}
Moreover, the ggBm includes the non-local stochastic models for anomalous diffusion: of both sub- ($0<\alpha<1$) and super- type ($1<\alpha<2$). Differentiating \eqref{Eq:Mura} with respect to time we obtain \eqref{Eq:Main}. Let us notice that putting $\beta=1$ in \eqref{Eq:Mura} and taking the first time derivative we arrive at the fractional Brownian diffusion equation \cite{watkins2009}
\begin{equation}
    \frac{\partial u(t,x)}{\partial t}=\alpha t^{\alpha-1}\frac{\partial^2 u(t,x)}{\partial x^2}.
\end{equation}

In this work we use the Galerkin method to devise a stable numerical scheme for solving \eqref{Eq:Main}. In order to do so, we multiply \eqref{Eq:Main} by a test function $v\in H^1(\mathbb{R})$ and integrate by parts to obtain the weak form 
\begin{equation}\label{Eq:WeakForm}
    (u_t,\chi)=-\frac{\alpha}{\beta}t^{\alpha-1}(D_{\alpha/\beta}^{\beta-1,1-\beta}u_x,\chi_x),\quad \forall \chi\in H^1(\mathbb{R}).
\end{equation}
To simplify notation we use $u_t\coloneqq \partial u(t,x)/\partial t$, and $u_x$ to denote the weak derivatives with respect appropriate variables. Furthermore, by $(\cdot,\cdot)$ and $\|\cdot\|$ we denote the standard $L^2(\mathbb{R})$ inner product and norm, respectively, i.e.
\begin{equation}
    (u,v)\coloneqq \int\limits_{-\infty}^{\infty} u(x)v(x)\,d x,\quad \|u\|=(u,u)^{1/2},\quad u,v\in L^2(\mathbb{R}).
\end{equation}
The Sobolev space $H^1(\mathbb{R})$ is defined in a standard way, i.e. $H^1\coloneqq \{u\in L^2(\mathbb{R}):D^1 u\in L^2(\mathbb{R}) \}$, where $D^1 u$ is a first order weak derivative of the function $u$. Hence, if $u$ is a solution of \eqref{Eq:WeakForm} then  $D^1 u=u_x$. Moreover, note that $H^1(\mathbb{R})=H_0^1(\mathbb{R})$.

Various numerical schemes have been proposed for solving the time fractional diffusion equation \cite{li2019numerical}. The procedure usually involves discretization of fractional operators with respect to time using for example, the L1 scheme \cite{oldham1974fractional}, convolution quadrature \cite{lubich2004convolution} or modifications of them. The spatial dimension can be tackled, apart from other approaches, by the Finite Difference \cite{zhang2011error,murio2008implicit}, Finite Element \cite{li2016analysis, jin2018analysis} or Spectral methods \cite{plociniczak2021linear, li2009space, lin2007finite}. The inherent characteristic feature of the solution to the time-fractional diffusion equation is its singularity near $t=0$ which is in a stark contrast with the classical case \cite{sakamoto2011initial}. For this reason the temporal discretization can experience accuracy loss \cite{stynes2021survey}. There are several methods to overcome this difficulty, one of which is the use of graded mesh \cite{gracia2018convergence, kopteva2019error}. Our initial studies indicate that that the solution Erd\'elyi--Kober diffusion equation \eqref{Eq:Main} also exhibits such a singular behaviour. 

The outline of the paper is organised as follows. In Sect. \ref{discretization} we propose the discretization methods for the Erd\'elyi--Kober fractional derivative together with the proofs of their errors asymptotic behaviour. In Sect. \ref{Results} we prove the stability of the semi-discrete problem of weak formulation of Erd\'elyi--Kober fractional diffusion equation \eqref{TimeDiscreteEq} and derive estimates on the error of exact solution approximation. In Sect. \ref{NumAnalysis}, the fully-discrete method is introduced. There, the orthogonal basis composed of Hermite functions is used to approximate the solution along the spatial dimension. Furthermore, numerical examples are given to support the theoretical results of proposed methods. Finally, in Sect. \ref{Conclusions}, the conclusions of our results are discussed.

\section{Discretization of the Erd\'elyi--Kober differential operator}
\label{discretization}
Following \cite{kiryakova1993, kiryakova1997} and \cite{kilbas2006} let us define the Erd\'elyi--Kober fractional integral operator $I_{\eta}^{\gamma, \mu}$
\begin{equation}
    I_{\eta}^{\gamma, \mu} \phi(t)\coloneqq\frac{\eta }{\Gamma(\mu)}t^{-\eta (\mu +\gamma)}\int\limits_{0}^t \tau^{\eta (\gamma+1)-1}(t^\eta-\tau^\eta)^{\mu-1}\phi(\tau)\, d \tau,\quad t\in [0,T],
\end{equation}
where $\mu >0,\ \eta>0,\ \gamma\in\mathbb{R}$, $\phi\in C_\lambda \coloneqq \{f(x)=x^p \hat{f}(x),\ p>\lambda, \ \hat{f}\in C[0,\infty)\}$  and $\lambda>-\beta(\gamma+1)$.
When $\eta=1$ the operator $I_\eta^{\gamma,\mu}$ becomes the fractional integral operator originally introduced in \cite{erdelyi1940} and \cite{kober1940}. Note that if we change the variable according to $x=\tau/t$ we get the equivalent form of the Erd\'elyi--Kober fractional integral which is particularly useful for numerical calculations
\begin{equation}\label{EK:Int}
    I_{\eta}^{\gamma, \mu} \phi(t)=\frac{\eta }{\Gamma(\mu)}\int\limits_{0}^1 x^{\eta (\gamma+1)-1}(1-x^\eta)^{\mu-1}\phi(x t)\, d x,\quad t\in [0,T].
\end{equation}
Moreover, using the above integral operator, we define the Erd\'elyi--Kober fractional differential operator \cite{kiryakova1993, kilbas2006},
\begin{equation}\label{EK:Diff}
    D_{\eta}^{\gamma,\mu}\phi(t)\coloneqq\prod\limits_{j=1}^n \biggl(\gamma+j+\frac{1}{\eta} t \frac{d}{d t}\biggr) (I_\eta^{\gamma+\mu, n-\mu} \phi(t)),\quad t\in [0,T],
\end{equation}
where $n-1<\mu<n,\ n\in \mathbb{N}$, $\phi\in C_\lambda^n \coloneqq \{f(x)=x^p \hat{f}(x),\ p>\lambda, \ \hat{f}\in C^n[0,\infty)\}$, and the parameters $\eta,\ \mu,\ \gamma,\ \lambda$ satisfy the same condition as before.

Next, we fix $t=t_n=n k$, where $k$ is a some small positive constant, and consider definition \eqref{EK:Int} with $\eta=\alpha/\beta,\ \gamma=0$ and $\mu=\beta$. To provide a discretization of $I_{\alpha/\beta}^{0,\beta}$ we divide the interval $[0,1]$ into $n$ equally spaced subintervals, and approximate the value of the integral on each subinterval by the rectangule rule obtaining
\begin{equation}
L_{\alpha/\beta}^{0,\beta}\phi(t_n)\coloneqq \sum\limits_{i=1}^{n} c_{n,i} \phi\biggl(\frac{i}{n} t_n\biggr)=\sum\limits_{i=1}^{n} c_{n,i} \phi(t_i),
\end{equation}
where
\begin{equation}\label{CoeffDef}
     c_{n,i}=\frac{1}{\beta \Gamma(\beta)} \biggl(\biggl(1-\biggl(\frac{i-1}{n}\biggr)^{\alpha /\beta }\biggr)^{\beta }-\biggl(1-\biggl(\frac{i}{n}\biggr)^{\alpha /\beta }\biggr)^{\beta}\biggr).
 \end{equation}
Furthermore, let us notice that putting $\eta=\alpha/\beta,\ \gamma=\beta-1$ and $\mu=1-\beta$ into \eqref{EK:Diff} we have
 \begin{equation}
     D_{\alpha/\beta}^{\beta-1,1-\beta}\phi(t_n)=\biggl(\beta+\frac{\beta}{\alpha}t_n\frac{d}{d t}\biggr)I_{\alpha/\beta}^{0,\beta}\phi(t_n).
 \end{equation}
Using the discretization operator $L_{\alpha/\beta}^{0,\beta}$ and the finite difference scheme for the first-order derivative, we obtain the discrete Erd\'elyi--Kober fractional differential operator 
 \begin{equation}
     G_{\alpha/\beta}^{\beta-1,1-\beta}\phi(t_n)\coloneqq\beta L_{\alpha/\beta}^{0,\beta}\phi(t_n) + \frac{\beta}{\alpha} t_n \overline{\partial} L_{\alpha/\beta}^{0,\beta}\phi(t)=\beta L_{\alpha/\beta}^{0,\beta}\phi(t_n) + \frac{\beta}{\alpha} t_n \frac{1}{k}( L_{\alpha/\beta}^{0,\beta}\phi(t_n)-L_{\alpha/\beta}^{0,\beta}\phi(t_{n-1})).
 \end{equation}
However, the above notion of the discretization of Erd\'elyi--Kober fractional differential operator is not the only one. Let us notice that the derivative part of the operator $D_{\alpha/\beta}^{\beta-1,1-\beta}$ can be rewritten in the following way
 \begin{equation}
 \begin{split}
     t_n \frac{d}{d t} I_{\alpha/\beta}^{0,\beta}\phi(t_n) &= \frac{\alpha}{\beta \Gamma(\beta)} t_n \frac{d}{d t} \int\limits_{0}^{1}\tau^{\frac{\alpha}{\beta}-1}\bigl(1-\tau^{\frac{\alpha}{\beta}}\bigr)^{\beta-1}\phi(\tau t_n)\, d \tau\\
     &=\frac{\beta}{\alpha} \frac{\alpha}{\beta \Gamma(\beta)} \int\limits_{0}^{1}\tau^{\frac{\alpha}{\beta}-1}\bigl(1-\tau^{\frac{\alpha}{\beta}}\bigr)^{\beta-1}t_n \frac{d}{d t}\phi(\tau t_n)\, d \tau \\
     &=\frac{\beta}{\alpha}\frac{\alpha}{\beta \Gamma(\beta)} \sum\limits_{i=1}^{n}\int\limits_{\frac{i-1}{n}}^{\frac{i}{n}}\tau^{\frac{\alpha}{\beta}-1}\bigl(1-\tau^{\frac{\alpha}{\beta}}\bigr)^{\beta-1} \tau  \frac{d}{d \tau}\phi(\tau t_n)\, d \tau.
     \end{split}
 \end{equation}
Hence, on each subinterval we approximate the derivative with respect to $\tau$  by a finite difference scheme and then the alternative discretization of the operator $D_{\alpha/\beta}^{\beta-1,1-\beta}$ is
 \begin{equation}
 \begin{split}
     K_{\alpha/\beta}^{\beta-1,1-\beta}\phi(t_n)\coloneqq & \beta L_{\alpha/\beta}^{0,\beta}\phi(t_n)+\frac{\beta}{\alpha}\frac{\alpha}{\beta \Gamma(\beta)} \sum\limits_{i=1}^{n} d_{n,i}  n (\phi(t_i)-\phi(t_{i-1})\\
     =& \beta L_{\alpha/\beta}^{0,\beta}\phi(t_n)+\frac{\beta}{\alpha}\frac{\alpha}{\beta \Gamma(\beta)} \sum\limits_{i=2}^{n} (d_{n,i-1}-d_{n,i})  n \phi(t_{i-1}) + n d_{n,n} \phi(t_n) - n d_{n,1} \phi(0),
     \end{split}
 \end{equation}
 with
 \begin{equation}
        \begin{split} d_{n,i}=&\frac{\beta}{n (\alpha +\beta )}  \biggl(n \biggl(\frac{i}{n}\biggr)^{\frac{\alpha}{\beta}+1 } \, _2F_1\biggl(1-\beta ,1+\frac{\beta }{\alpha };2+\frac{\beta }{\alpha };\biggl(\frac{i}{n}\biggr)^{\alpha /\beta }\biggr)\\
         \ & -n \biggl(\frac{i-1}{n}\biggr)^{\frac{\alpha }{\beta }+1} \, _2F_1\biggl(1-\beta ,1+\frac{\beta }{\alpha };2+\frac{\beta }{\alpha };\biggl(\frac{i-1}{n}\biggr)^{\alpha /\beta }\biggr)\biggr),
         \end{split}
     \end{equation}
where $ _2 F_1(a,b;c;z)$ is the Gauss hypergeometric function
 \begin{equation}
        \, _2 F_1 (a,b;c;z)=\sum\limits_{k=0}^{\infty}\frac{(a)_k (b)_k}{(c)_k}\frac{z^k}{k!},
    \end{equation}
    with Pochhammer symbol
    \begin{equation}
        (a)_k=\left\{\begin{array}{ll}
            1 & \textrm{for } k=0, \\
            a(a+1)\cdots (a+n-1) &\textrm{for } k>0. 
        \end{array}\right.
    \end{equation}
The evaluation of the Gauss hypergeometric function may be expensive in practice, hence in further numerical analysis we would rather use $G_{\alpha/\beta}^{\beta-1,1-\beta}$ than $K_{\alpha/\beta}^{\beta-1,1-\beta}$ .

The following theorem provides the estimates on the order of discretization errors of the approximation operators $L_{\alpha/\beta}^{0,\beta},\ K_{\alpha/\beta}^{\beta-1,1-\beta}$ $G_{\alpha/\beta}^{\beta-1,1-\beta}$ as the number of the subintervals of $[0,1]$ goes to infinity.
\begin{thm}\label{Thm:Order}
 Fix $0<\alpha,\beta<1$ and assume that $\phi \in C^1([0,T])$ and $\psi \in C^2([0,T])$. Then, for a fixed $t_n \in (0,T)$, where $t_n=n k$, the discretization errors corresponding to the operator $ L_{\alpha/\beta}^{0,\beta},\ K_{\alpha/\beta}^{\beta-1,1-\beta}$ $G_{\alpha/\beta}^{\beta-1,1-\beta}$ can be estimated as below.
 \begin{itemize}
     \item Integral operator
     \begin{equation}\label{EK:ErrorEstimate}
        I_{\alpha/\beta}^{0,\beta}\phi(t_n) - L_{\alpha/\beta}^{0,\beta}\phi(t_n) =\frac{t_n}{n} \frac{ \phi'(\sigma t_n)}{\beta \Gamma(\beta)} .
     \end{equation}
     \item Differential operator I
     \begin{equation}\label{EKDiff:ErrorEstimate}
     \begin{split}
         |D_{\alpha/\beta}^{\beta-1,1-\beta} \psi(t_n)-G_{\alpha/\beta}^{\beta-1,1-\beta}\psi(t_n)|\le & \frac{1}{n}\biggl[ \beta t_n \frac{ |\psi'(\sigma t_n)|}{\beta \Gamma(\beta)}+\frac{t_n \beta \Gamma \biggl(\frac{2 \beta }{\alpha }+1\biggr)}{\alpha \Gamma \biggl(\frac{2 \beta }{\alpha }+\beta +1\biggr)}  |\psi''(t^*)| \\
         \ & + t_n C_1 |\psi'(t^{* *})|+\frac{t_n^2 C_2}{\beta \Gamma(\beta)} |\psi''(t^{* * *})|\biggr].
         \end{split}
     \end{equation}
     \item Differential Operator II
     \begin{equation}
        | D_{\alpha/\beta}^{\beta-1,1-\beta} \psi(t_n)-K_{\alpha/\beta}^{\beta-1,1-\beta}\psi(t_n)|\le \frac{1}{n}\biggl(\frac{t_n |\psi'(\sigma t_n)|}{\beta \Gamma(\beta)}  + \frac{\Gamma \biggl(1+\frac{\beta }{\alpha }\biggr)}{\Gamma \biggl(\frac{\beta }{\alpha }+\beta +1\biggr)} t_n^2 |\psi''(\sigma^* t_n)| \biggr).
     \end{equation}
 \end{itemize}
 Here, $\sigma,\, \sigma^*\in (0,1),\ t^{* },\, t^{* * },\, t^{* * *} \in (0,t_n)$ and $C_1,\ C_2$ are some positive constants independent on $n$. 
 \end{thm}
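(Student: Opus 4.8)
The plan is to treat the three estimates separately, deriving the two differential-operator bounds on top of the integral one. Throughout I write $\eta=\alpha/\beta$ and introduce the nonnegative weight $w(x)=\tfrac{\eta}{\Gamma(\beta)}x^{\eta-1}(1-x^\eta)^{\beta-1}$, so that by \eqref{EK:Int} one has $I_{\alpha/\beta}^{0,\beta}\phi(t_n)=\int_0^1 w(x)\phi(xt_n)\,dx$; the substitution $u=x^\eta$ shows that the coefficient \eqref{CoeffDef} is exactly $c_{n,i}=\int_{(i-1)/n}^{i/n}w(x)\,dx$ and that $\int_0^1 w(x)\,dx=1/(\beta\Gamma(\beta))$.

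For the integral operator, since $c_{n,i}$ is the exact mass of $w$ on the $i$-th subinterval, the error is a sum of per-subinterval right-endpoint quadrature errors, $I_{\alpha/\beta}^{0,\beta}\phi(t_n)-L_{\alpha/\beta}^{0,\beta}\phi(t_n)=\sum_{i=1}^n\int_{(i-1)/n}^{i/n}w(x)\bigl[\phi(xt_n)-\phi(t_i)\bigr]\,dx$. Applying the mean value theorem on each subinterval, $\phi(xt_n)-\phi(t_i)=t_n\,(x-\tfrac{i}{n})\,\phi'(\theta_{i,x})$, I would note that $w(x)(x-\tfrac{i}{n})$ keeps a fixed (nonpositive) sign on every subinterval and that $|x-\tfrac{i}{n}|\le 1/n$. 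Because the kernel is sign-definite and $\phi'$ is continuous on $[0,t_n]$, the generalized weighted mean value theorem collapses the weighted average of $\phi'$ to a single evaluation $\phi'(\sigma t_n)$; pulling out the factor $1/n$ and using $\int_0^1 w=1/(\beta\Gamma(\beta))$ yields \eqref{EK:ErrorEstimate}, which is $O(k)$ since $t_n=nk$.

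For Differential Operator II I would split $D_{\alpha/\beta}^{\beta-1,1-\beta}\psi(t_n)-K_{\alpha/\beta}^{\beta-1,1-\beta}\psi(t_n)=\beta\bigl(I_{\alpha/\beta}^{0,\beta}-L_{\alpha/\beta}^{0,\beta}\bigr)\psi(t_n)+\tfrac{\beta}{\alpha}\bigl[\,t_n\tfrac{d}{dt}I_{\alpha/\beta}^{0,\beta}\psi(t_n)-(\text{its discretization})\bigr]$. The first piece is controlled by the integral estimate already proved. For the second, using the identity $t_n\tfrac{d}{dt}\psi(\tau t_n)=\tau\tfrac{d}{d\tau}\psi(\tau t_n)$ derived before the statement, the discretization replaces $\tfrac{d}{d\tau}\psi(\tau t_n)$ on each subinterval by the forward difference $n(\psi(t_i)-\psi(t_{i-1}))$, which is precisely the subinterval average of $\tfrac{d}{d\tau}\psi(\tau t_n)$; its pointwise deviation is bounded by $\tfrac1n$ times $\max|t_n^2\psi''|$. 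Integrating against the weight $\tau^{\eta}(1-\tau^\eta)^{\beta-1}$ and using $\int_0^1\tau^{\eta}(1-\tau^\eta)^{\beta-1}\,d\tau=\tfrac1\eta B(1+\beta/\alpha,\beta)$ produces the constant $\Gamma(1+\beta/\alpha)/\Gamma(\beta/\alpha+\beta+1)$ and the $t_n^2|\psi''|$ term, again with the overall factor $1/n$.

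Differential Operator I is the delicate case; here the split is $D_{\alpha/\beta}^{\beta-1,1-\beta}\psi(t_n)-G_{\alpha/\beta}^{\beta-1,1-\beta}\psi(t_n)=\beta(I-L)\psi(t_n)+\tfrac{\beta}{\alpha}t_n\bigl[(\tfrac{d}{dt}I\psi(t_n)-\overline{\partial}\,I\psi(t_n))+\overline{\partial}(I-L)\psi(t_n)\bigr]$, where $I=I_{\alpha/\beta}^{0,\beta}$, $L=L_{\alpha/\beta}^{0,\beta}$. The first bracketed term is the backward-difference error for the smooth map $t\mapsto I\psi(t)$; differentiating twice under the integral sign and applying the integral mean value theorem bounds $\tfrac{d^2}{dt^2}I\psi$ by $\Gamma(2\beta/\alpha+1)/\Gamma(2\beta/\alpha+\beta+1)\,\max|\psi''|$, giving the corresponding term of \eqref{EKDiff:ErrorEstimate}. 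The main obstacle is the last term $\overline{\partial}(I-L)\psi(t_n)=\tfrac1k\bigl[(I-L)\psi(t_n)-(I-L)\psi(t_{n-1})\bigr]$: the one-point form \eqref{EK:ErrorEstimate} cannot be differenced directly, since both the mean-value point $\sigma$ and the quadrature weights at resolutions $n$ and $n-1$ change. To handle it I would expand $(I-L)\psi(t_m)$ to second order, $(I-L)\psi(t_m)=-t_m\sum_i\psi'(t_i)\,p_i^{(m)}+R_m$ with first moments $p_i^{(m)}=\int_{(i-1)/m}^{i/m}w(x)(\tfrac{i}{m}-x)\,dx\ge0$ and $|R_m|\le \tfrac{k^2}{2\beta\Gamma(\beta)}\max|\psi''|$, and crucially exploit that the nodes $t_i=ik$ are common to all resolutions. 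One then shows the leading sums at levels $n$ and $n-1$ differ by $O(k^2)$ — intuitively because $\tfrac1k\bigl(-t_m\sum_i\psi'(t_i)p_i^{(m)}\bigr)$ is, up to a controlled Riemann-sum error, a smooth function of $t_m$ evaluated at consecutive grid points — so that $\overline{\partial}(I-L)\psi(t_n)=O(k)$ with remainder governed by $\psi'$ (through the variation of the weights) and $\psi''$ (through the second differences). This produces the $C_1|\psi'|$ and $t_n^2 C_2/(\beta\Gamma(\beta))\,|\psi''|$ terms with $n$-independent $C_1,C_2$, and since $k=t_n/n$ every contribution carries the factor $1/n$.
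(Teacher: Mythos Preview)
Your overall architecture matches the paper's: same three-way split, same reduction of $D-G$ to $\beta(I-L)+\tfrac{\beta}{\alpha}t_n\bigl[(\tfrac{d}{dt}I-\overline{\partial}I)+\overline{\partial}(I-L)\bigr]$, and the same treatment of the integral estimate and of the $K$-operator via a second-order Taylor remainder integrated against the weight $\tau^{\alpha/\beta}(1-\tau^{\alpha/\beta})^{\beta-1}$. So there is no methodological disagreement.

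The gap is in the last term for $G$, namely $t_n\,\overline{\partial}(I-L)\psi(t_n)$. Your second-order expansion $(I-L)\psi(t_m)=-t_m\sum_i\psi'(t_i)\,p_i^{(m)}+R_m$ is fine, and the $R_m$ contribution already gives the $t_n^2 C_2/(\beta\Gamma(\beta))\,|\psi''|$ term. But the step ``the leading sums at levels $n$ and $n-1$ differ by $O(k^2)$, intuitively because $\tfrac1k S_m$ is, up to a Riemann-sum error, a smooth function of $t_m$'' is not a proof and, as stated, is not obviously true: the moments $p_i^{(m)}$ are built from a partition of $[0,1]$ into $m$ pieces, so as $m$ changes the whole family of weights changes, not just the evaluation point. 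There is no single smooth function of a continuous parameter being sampled at $t_{n-1}$ and $t_n$; the object really is a difference of two distinct sums.

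The paper confronts this head-on. After a mean-value reduction it isolates the purely combinatorial quantity
\[
C_m=\sum_{i=1}^{m}\frac{\alpha}{\beta\Gamma(\beta)}\int_{i-1}^{i}f(m,s)(i-s)\,ds,\qquad f(m,s)=\tfrac1m\bigl(\tfrac{s}{m}\bigr)^{\alpha/\beta-1}\bigl(1-(\tfrac{s}{m})^{\alpha/\beta}\bigr)^{\beta-1},
\]
shows the error term equals $t_n\psi'(\cdot)(C_n-C_{n-1})+t_n C_n\,(x^*-x)k\,\psi''(\cdot)$, and then proves $C_n-C_{n-1}=O(1/n)$ by a direct computation: rewriting $C_m$ in closed form, splitting $C_n-C_{n-1}=S_1+S_2$, bounding $|S_2|\le \tfrac{1}{\alpha n}$ trivially, and Taylor-expanding the summands $\theta_i$ of $S_1$ for large $n$ to dominate them by $\tfrac{1}{n^2}W(i/n)$ with $W$ integrable on $(0,1)$ (its singularities at $0$ and $1$ are of order $x^{\alpha/\beta-1}$ and $(1-x)^{\beta-1}$). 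That Riemann-sum argument is where the actual work lives; your proposal stops exactly before it. If you want to keep your expansion in $p_i^{(m)}$, you will need an equivalent estimate on $\sum_{i}\psi'(t_i)\bigl(t_n p_i^{(n)}-t_{n-1}p_i^{(n-1)}\bigr)$ together with the boundary term $t_n\psi'(t_n)p_n^{(n)}$, and that boils down to the same $C_n-C_{n-1}=O(1/n)$ computation.
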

 \begin{proof}
 The asymptotic relation for Erd\'elyi--Kober fractional integral operator $I_{\alpha/\beta}^{0,\beta}$ was delivered in \cite{plociniczak2017}, therefore, we will not provide a detailed proof of it here.

Since we know the order of discretization of Erd\'elyi--Kober fractional integral operator, we proceed to find the order of discrete representations of the operator $D_{\alpha/\beta}^{\beta-1,1-\beta}$. In the beginning, let us consider the operator $G_{\alpha/\beta}^{\beta-1,1-\beta}$. Note that Erd\'elyi--Kober fractional differential operator can be rewritten as
 \begin{equation}
 \begin{split}
     D_{\alpha/\beta}^{\beta-1,1-\beta}\psi(t_n)= & \beta I_{\alpha/\beta}^{0,\beta}\psi(t_n)+\frac{\beta}{\alpha}t_n\frac{d}{d t}I_{\alpha/\beta}^{0,\beta}\psi(t_n) =\beta L_{\alpha/\beta}^{0,\beta}\psi(t_n) +\beta R_n+\frac{\beta}{\alpha}t_n\frac{d}{d t}I_{\alpha/\beta}^{0,\beta}\psi(t_n)\\
         = & \beta L_{\alpha/\beta}^{0,\beta}\psi(t_n) +\beta R_n + \frac{\beta}{\alpha} t_n \overline{\partial} L_{\alpha/\beta}^{0,\beta}\psi(t_n) + \frac{\beta}{\alpha}\widehat{R_n},
        \end{split}
 \end{equation}
 where $R_n$ is a discretization error of the Erd\'elyi--Kober fractional integral operator  \eqref{EK:ErrorEstimate} and $\widehat{R_n}= t_n (\frac{d}{d t}I_{\alpha/\beta}^{0,\beta}\psi(t_n) - \overline{\partial}I_{\alpha/\beta}^{0,\beta}\psi(t_n)+ \overline{\partial}I_{\alpha/\beta}^{0,\beta}\psi(t_n)- \overline{\partial}L_{\alpha/\beta}^{0,\beta}\psi(t_n))$. First difference in $\widehat{R_n}$ can be estimated in a standard way
 \begin{equation}
    \begin{split}
        \frac{d}{d t}I_{\alpha/\beta}^{0,\beta}\psi(t_n) - \overline{\partial}I_{\alpha/\beta}^{0,\beta}\psi(t_n)= &  \frac{d}{d t}I_{\alpha/\beta}^{0,\beta}\psi(t_n) - \frac{1}{k}(I_{\alpha/\beta}^{0,\beta}\psi(t_n)-I_{\alpha/\beta}^{0,\beta}\psi(t_{n-1})) = \frac{1}{n} \frac{d^2}{d t^2} I_{\alpha/\beta}^{0,\beta}\psi(t^{*})\\
        \  = & \frac{\Gamma \biggl(\frac{2 \beta }{\alpha }+1\biggr)}{\Gamma \biggl(\frac{2 \beta }{\alpha }+\beta +1\biggr)} \psi''(\sigma^* t^*) \frac{1}{n},
        \end{split}
    \end{equation}
    where $\sigma^* \in (0,1)$ and $t^* \in [t_{n-1},t_n]$. However, to estimate the order of the second term in $\widehat{R_n}$ we need to investigate this term carefully. Note that using again the Mean Value Theorem for Integrals and Sums, and the relation $c_{n,i}<c_{n-1,i}$ for $i\in \{1,2,\ldots,n-1\}$ we have
    \begin{equation}
    \begin{split}
         t_n\overline{\partial}I_{\alpha/\beta}^{0,\beta}\psi(t_n)  - t_n\overline{\partial}L_{\alpha/\beta}^{0,\beta}\psi(t_n) 
         = &  \frac{t_n}{k}\sum\limits_{i=1}^{n-1} \frac{\alpha}{\beta \Gamma(\beta)}\int\limits_{i-1}^{i}(f(n,s)-f(n-1,s))(\psi(i k)-\psi(s k))\, d s\\
    \ & + \frac{t_n}{k}\frac{\alpha}{\beta \Gamma(\beta)}\int\limits_{n-1}^{n} f(n,s)(\psi(n k)-\psi(s k))\, d s\\
    = & t_n \psi'(x k) \sum\limits_{i=1}^{n-1} \frac{\alpha}{\beta \Gamma(\beta)}\int\limits_{i-1}^{i}(f(n,s)-f(n-1,s))(i-s)\, d s\\
    \ & + t_n \psi'(x_n k)\frac{\alpha}{\beta \Gamma(\beta)}\int\limits_{n-1}^{n}f(n,s)(n-s)\, d s,
\end{split}
\end{equation}
where $i-1\le s<\sigma_i <i$, $0<x<n-1,\ n-1<x_n<n$ and
\begin{equation}
    f(n, s)\coloneqq \frac{1}{n}\biggl(\frac{s}{n}\biggr)^{\frac{\alpha}{\beta}-1} \biggl(1-\biggl(\frac{s}{n}\biggr)^{\frac{\alpha}{\beta}}\biggr)^{\beta-1}.
\end{equation}
 Furthermore, performing a straightforward calculation we get
 \begin{equation}
     C_n\coloneqq\sum\limits_{i=1}^{n} \frac{\alpha}{\beta \Gamma(\beta)}\int\limits_{i-1}^{i}f(n,s) (i-s) \, d s\le \sum\limits_{i=1}^{n} \frac{\alpha}{\beta \Gamma(\beta)}\int\limits_{i-1}^{i}f(n,s) \, d s=\frac{1}{\beta \Gamma(\beta)}.
 \end{equation}
 Moreover, note that $C_n$ can also be written in the following way
 \begin{equation}
 \label{Form1}
     C_n= \sum\limits_{i=0}^{n-1} \frac{i}{\beta \Gamma(\beta)} \biggl(1-\biggl(\frac{i}{n}\biggr)^{\alpha/\beta}\biggr)^\beta - \frac{n \Gamma \biggl(\frac{\alpha +\beta }{\alpha }\biggr)}{\Gamma \biggl(\frac{\beta }{\alpha }+\beta +1\biggr)}.
 \end{equation}
 Next, we use the Intermediate Value Theorem to get
 \begin{equation}\label{IntValueThoremPsi}
\begin{split}
    \psi'(x k) \sum\limits_{i=1}^{n-1} \frac{\alpha}{\beta \Gamma(\beta)}\int\limits_{i-1}^{i}f(n,s) (i-s)\, d s + \psi'(x_n k) \frac{\alpha}{\beta \Gamma(\beta)}\int\limits_{n-1}^{n}f(n,s)(n-s)\, d s =\psi'(x^* k) C_n,
    \end{split}
\end{equation}
where $x^*\in (x, x_n)$. 
Finally, we have
 \begin{equation}
\begin{split}
    t_n\overline{\partial}I_{\alpha/\beta}^{0,\beta}\psi(t_n) - t_n\overline{\partial}L_{\alpha/\beta}^{0,\beta}\psi(t_n) & = t_n (\psi'(x^* k) C_n - \psi'(x k) C_{n-1}) \\
    \  &= t_n \psi'(x^* k) (C_n-C_{n-1})+ t_n C_n (\psi'(x^* k)-\psi'(x k))\\
    \ & = t_n \psi'(x^* k) (C_n-C_{n-1})+ t_n C_n (x^*-x) k \psi''(x^{* *} k),
    \end{split}
\end{equation}
where $x^{* *}\in (x,x^*)$. Let us notice that the difference $|x-x^*|$ does not increase when $n$ becomes larger: $\psi$ is continuously differentiable on the interval $(0,t_n]$ and due to the fact that for increasing $n$ the contribution of the integral in second term in \eqref{IntValueThoremPsi} to $C_n$ becomes negligible. Next, we use \eqref{Form1} to obtain
\begin{equation}
    \begin{split}
        \frac{\beta \Gamma(\beta)}{\alpha}(C_n-C_{n-1})= &  \sum\limits_{i=0}^{n-1}  \frac{1}{\alpha}\biggl(1-\biggl(\frac{i}{n}\biggr)^{\alpha/\beta}\biggr)^\beta-\frac{1}{\alpha}\biggl(1-\biggl(\frac{i}{n-1}\biggr)^{\alpha/\beta}\biggr)^\beta-\frac{\beta \Gamma(\beta) \Gamma \biggl(1+\frac{\beta }{\alpha }\biggr)}{\alpha \Gamma \biggl(\frac{\beta }{\alpha }+\beta +1\biggr)}\\
        = &  \sum\limits_{i=0}^{n-1} \biggl[ \frac{1}{\alpha}\biggl(1-\biggl(\frac{i}{n}\biggr)^{\alpha/\beta}\biggr)^\beta-\frac{1}{\alpha}\biggl(1-\biggl(\frac{i}{n-1}\biggr)^{\alpha/\beta}\biggr)^\beta-\int\limits_{\frac{i}{n}}^{\frac{i+1}{n}} s^{\frac{\alpha}{\beta}}(1-s^{\frac{\alpha}{\beta}})^{\beta-1} \, d s\biggr]\\
        = & \sum\limits_{i=0}^{n-1} \biggl[ \frac{1}{\alpha}\biggl(1-\biggl(\frac{i}{n}\biggr)^{\alpha/\beta}\biggr)^\beta-\frac{1}{\alpha}\biggl(1-\biggl(\frac{i}{n-1}\biggr)^{\alpha/\beta}\biggr)^\beta-\frac{i}{n}\int\limits_{\frac{i}{n}}^{\frac{i+1}{n}} s^{\frac{\alpha}{\beta}-1}(1-s^{\frac{\alpha}{\beta}})^{\beta-1} \, d s\biggr]\\
        \ & +  \sum\limits_{i=0}^{n-1} \biggl[\frac{i}{n}\int\limits_{\frac{i}{n}}^{\frac{i+1}{n}} s^{\frac{\alpha}{\beta}-1}(1-s^{\frac{\alpha}{\beta}})^{\beta-1} \, d s-\int\limits_{\frac{i}{n}}^{\frac{i+1}{n}} s^{\frac{\alpha}{\beta}}(1-s^{\frac{\alpha}{\beta}})^{\beta-1} \, d s\biggr]=S_1 + S_2.
    \end{split}
\end{equation}
Note, that the absolute value of the second term can be easily bounded from above as follows
\begin{equation}
\begin{split}
    |S_2|=&\sum\limits_{i=0}^{n-1} \biggl|\frac{i}{n}\int\limits_{\frac{i}{n}}^{\frac{i+1}{n}} s^{\frac{\alpha}{\beta}-1}(1-s^{\frac{\alpha}{\beta}})^{\beta-1} \, d s-\int\limits_{\frac{i}{n}}^{\frac{i+1}{n}} s^{\frac{\alpha}{\beta}}(1-s^{\frac{\alpha}{\beta}})^{\beta-1} \, d s\biggr|\\
    = & \sum\limits_{i=0}^{n-1}\int\limits_{\frac{i}{n}}^{\frac{i+1}{n}} s^{\frac{\alpha}{\beta}-1}(1-s^{\frac{\alpha}{\beta}})^{\beta-1} (s-\frac{i}{n})\, d s\le \frac{1}{n} \sum\limits_{i=0}^{n-1} \int\limits_{\frac{i}{n}}^{\frac{i+1}{n}} s^{\frac{\alpha}{\beta}-1}(1-s^{\frac{\alpha}{\beta}})^{\beta-1} \, d s=\frac{1}{\alpha} \frac{1}{n}.
    \end{split}
\end{equation}
Considering now $S_1$, we evaluate the integral for each $i$ to get
\begin{equation}
\begin{split}
    S_1= &
    \sum\limits_{i=1}^n \biggl[ \frac{1}{\alpha}\biggl(1-\biggl(\frac{i}{n}\biggr)^{\alpha/\beta}\biggr)^\beta-\frac{1}{\alpha}\biggl(1-\biggl(\frac{i}{n-1}\biggr)^{\alpha/\beta}\biggr)^\beta \\
    \ & -\frac{i}{n} \biggl(\frac{1}{\alpha}\biggl(1-\biggl(\frac{i}{n}\biggr)^{\alpha /\beta }\biggr)^{\beta }-\frac{1}{\alpha}\biggl(1-\biggl(\frac{i+1}{n}\biggr)^{\alpha /\beta }\biggr)^{\beta }\biggr)\biggr]
    =\sum\limits_{i=1}^n \theta_i.
    \end{split}
\end{equation}
Next, for each $i\in\{1,2,\ldots,n-2\}$ we expand $\theta_i$ in the Taylor series for large $n$ with fixed $i/n$
\begin{equation}
\begin{split}
    \theta_i= &-\frac{1}{2 \beta  n^2}\biggl(1-\biggl(\frac{i}{n}\biggr)^{\alpha /\beta }\biggr)^{\beta -2} \biggl(\frac{i}{n}\biggr)^{\frac{\alpha }{\beta }-1} \biggl(\alpha -\beta +\beta  \biggl(\alpha  \biggl(\frac{i}{n}-1\biggr)+\frac{i}{n}+1\biggr) \biggl(\frac{i}{n}\biggr)^{\alpha /\beta }-\frac{\alpha  i}{n}-\frac{\beta  i}{n}\biggr)\\
    \ & +\mathcal{O}\biggl(\frac{(1-\bigl(\frac{i}{n}\bigr))^{\beta-2} \bigl(\frac{i}{n}\bigr)^{\frac{\alpha}{\beta}-2}}{n^3}\biggr).
\end{split}
\end{equation}
Let us notice that for sufficiently large $n$ we can bound $|\theta_i|$ from above as follows
\begin{equation}
\begin{split}
    |\theta_i|\le & C \frac{1}{2 \beta  n^2}\biggl(1-\biggl(\frac{i}{n}\biggr)^{\alpha /\beta }\biggr)^{\beta -2} \biggl(\frac{i}{n}\biggr)^{\frac{\alpha }{\beta }-1} \biggl|\biggl(\alpha -\beta +\beta  \biggl(\alpha  \biggl(\frac{i}{n}-1\biggr)+\frac{i}{n}+1\biggr) \biggl(\frac{i}{n}\biggr)^{\alpha /\beta }-\frac{\alpha  i}{n}-\frac{\beta  i}{n}\biggr)\biggr|\\
    = & C \frac{1}{ n^2} W\biggl(\frac{i}{n}\biggr),
    \end{split}
\end{equation}
where $C$ does not depend on $n$. Moreover, it is easy to note that
\begin{equation}
   \frac{1}{n} \sum\limits_{i=1}^{n-2} W\left(\frac{i}{n}\right)\xrightarrow{n\to \infty} \int\limits_{0}^{1}W(x)\, d x,
\end{equation}
where the convergence is a result of the definition of Riemann sum with the set $\{i/n:1\le i\le n-2\}$ as a partition of $(0,1)$ and the integrability of the function under integral sign. Function $W(x)$ has two singularities: at $x=0$ and $x=1$. Performing standard calculation we get $W(x)=(\alpha-\beta)x^{\alpha/\beta-1}+\mathcal{O}(x^{2\alpha/\beta-1})$ as $x\to 0$ and $W(x)=\alpha(1+\beta)(1-x)^{\beta-1}+\mathcal{O}((1-x)^\beta)$ as $x\to 1$. Since $\alpha,\beta>0$, the function $W(x)$ is integrable around its singular points. Furthermore, it is easy to see that 
\begin{equation}
    \theta_0=\theta_{n-1}=\mathcal{O}\biggl(\frac{1}{n}\biggr), \quad \mathrm{as}\ n\to \infty.
\end{equation}
Based on the above considerations we get 
\begin{equation}
    C_n-C_{n-1}=\mathcal{O}\biggl(\frac{1}{n}\biggr),\quad \mathrm{as}\ n\to \infty.
\end{equation}
Finally, we have
\begin{equation}
    |t_n\overline{\partial}I_{\alpha/\beta}^{0,\beta}\psi(t_n) - t_n\overline{\partial}L_{\alpha/\beta}^{0,\beta}\psi(t_n)|\le t_n \psi'(x^* k) \frac{C_1}{n}+t_n^2 C_n C_2 \psi''(x^{* *}k) \frac{1}{n},
\end{equation}
where $C_1$ and $C_2$ do not depend on $n$. Taking into account the estimates of all components in $\widehat{R_n}$ we further obtain
\begin{equation}
\begin{split}
    |\widehat{R_n}|= & |t_n (\frac{d}{d t}I_{\alpha/\beta}^{0,\beta}\psi(t_n) - \overline{\partial}I_{\alpha/\beta}^{0,\beta}\psi(t_n)+ \overline{\partial}I_{\alpha/\beta}^{0,\beta}\psi(t_n)- \overline{\partial}L_{\alpha/\beta}^{0,\beta}\psi(t_n))|\\
    \ \le & \frac{\Gamma \biggl(\frac{2 \beta }{\alpha }+1\biggr)}{\Gamma \biggl(\frac{2 \beta }{\alpha }+\beta +1\biggr)} t_n |\psi''(\sigma^* t^*)| \frac{1}{n} + t_n |\psi'(x^* k)| \frac{C_1}{n}+t_n^2 C_n C_2 |\psi''(x^{* *}k)| \frac{1}{n}\\
    \le & \frac{1}{n}\biggl[ \frac{t_n \Gamma \biggl(\frac{2 \beta }{\alpha }+1\biggr)}{ \Gamma \biggl(\frac{2 \beta }{\alpha }+\beta +1\biggr)} |\psi''(t^{*})| + t_n C_1 \psi'(t^{* *})|+\frac{t_n^2 C_2}{\beta \Gamma(\beta)} |\psi''(t^{* * *})|\biggr],
    \end{split}
\end{equation}
where $t^{*},t^{* *},t^{* * *} \in (0,t_n)$.
The above estimate together with $R_n$ yields \eqref{EKDiff:ErrorEstimate}. 

Let us now consider the operator $K_{\alpha/\beta}^{\beta-1,1-\beta}$. Similarly to the above considerations, the integral part of $D_{\alpha/\beta}^{\beta-1,1-\beta}$ is approximated using $\beta L_{\alpha/\beta}^{0,\beta} $. Now, let us examine solely the differentiation part of the analysed operator. Performing appropriate transformation we have
 \begin{equation}
 \begin{split}
     \frac{\alpha}{\beta \Gamma(\beta)} \sum\limits_{i=1}^{n} &  d_{n,i}  n (\psi(t_i)-  \psi(t_{i-1}))  - t_n \frac{d}{d t} I_{\alpha/\beta}^{0,\beta}\psi(t_n)  \\
     = & \frac{\alpha}{\beta \Gamma(\beta)}\biggl( \sum\limits_{i=1}^{n}\int\limits_{\frac{i-1}{n}}^{\frac{i}{n}}\tau^{\frac{\alpha}{\beta}-1}\bigl(1-\tau^{\frac{\alpha}{\beta}}\bigr)^{\beta-1} \tau  n (\psi(t_i)-\psi(t_{i-1}))\, d \tau
     -t_n \frac{d}{d t} \int\limits_{0}^{1}\tau^{\frac{\alpha}{\beta}-1}\bigl(1-\tau^{\frac{\alpha}{\beta}}\bigr)^{\beta-1}\psi(\tau t_n)\, d \tau\biggr)\\
     = & \frac{\alpha}{\beta \Gamma(\beta)}\biggl( \sum\limits_{i=1}^{n}\int\limits_{\frac{i-1}{n}}^{\frac{i}{n}}\tau^{\frac{\alpha}{\beta}-1}\bigl(1-\tau^{\frac{\alpha}{\beta}}\bigr)^{\beta-1}\tau\biggl[  n (\psi(t_i)-\psi(t_{i-1}))- \frac{d}{d \tau}\psi(\tau t_n)\biggr]\, d \tau\biggr).
     \end{split}
 \end{equation}
 We use Taylor series expansion for $\psi(x t_n)$ at point $x=\tau$  and obtain
 \begin{equation}
     \psi(x t_n)=\psi(\tau t_n)+ \frac{d }{d x}\psi(\tau t_n) (x-\tau) + \frac{t_n^2}{2} \psi''(\xi t_n) (x-\tau)^2,\quad \xi \in (\tau,x).
 \end{equation}
 Therefore, the expression in the square bracket under the integral sign can be rewritten in the following way
 \begin{equation}
 \begin{split}
    n (\psi(t_i)-\psi(t_{i-1}))- \frac{d}{d \tau}\psi(\tau t_n)= & n  \frac{t_n^2}{2}\biggl( \psi''(\xi_i t_n) \biggl(\frac{i}{n}-\tau\biggr)^2-\psi''(\xi_{i-1} t_n) \biggl(\frac{i-1}{n}-\tau\biggr)^2\biggr),
    \end{split}
 \end{equation}
 and
 \begin{equation}
     \begin{split}
         \bigl| \frac{\alpha}{\beta \Gamma(\beta)} \sum\limits_{i=1}^{n} & d_{n,i} n (\psi(t_i)-\psi(t_{i-1})-t_n \frac{d}{d t} I_{\alpha/\beta}^{0,\beta}\psi(t_n)  \bigr|\\
         \ & \le \frac{\alpha}{\beta \Gamma(\beta)}\biggl( \sum\limits_{i=1}^{n}\int\limits_{\frac{i-1}{n}}^{\frac{i}{n}}\tau^{\frac{\alpha}{\beta}}\bigl(1-\tau^{\frac{\alpha}{\beta}}\bigr)^{\beta-1}\biggl|  n  \frac{t_n^2}{2}\biggl( \psi''(\xi_i t_n) \biggl(\frac{i}{n}-\tau\biggr)^2-\psi''(\xi_{i-1} t_n) \biggl(\frac{i-1}{n}-\tau\biggr)^2\biggr)\biggr|\, d \tau\biggr)\\
         \ & \le \frac{\Gamma \biggl(1+\frac{\beta }{\alpha }\biggr)}{\Gamma \biggl(\frac{\beta }{\alpha }+\beta +1\biggr)} \frac{t_n^2 |\psi''(\sigma^* t_n)|}{n},\quad \sigma^*\in (0,1).
     \end{split}
 \end{equation}
 The above inequalities end the proof.
 \end{proof}
It is worth to mention that in \cite{plociniczak2017} authors proposed different discretization methods of the Erd\'elyi--Kober fractional integral operator. In addition to the rectangle rule they used also mid-point and trapezoid rule to obtain more accurate approximations.
In the convergence analysis of the approximate solution of \eqref{Eq:WeakForm} the functions $\phi$ and $\psi$ do not have to be continuously differentiable at $t=0$. Therefore, to tackle this singular behaviour we propose the weaker form of Theorem \ref{Thm:Order}.
 \begin{prop}\label{PropSingular}
 Fix $0<\alpha,\beta<1$  and assume that $\phi \in C^1((0,T))$ and $\psi \in C^2((0,T))$ such that
 \begin{align}
     |\phi(t)|+t^{1-\alpha} |\phi'(t)| & \le C,\\ \label{EK:CondPsi}
     |\psi(t)|+t^{1-\alpha} |\psi'(t)|+t^{2-\alpha} |\psi''(t)| & \le C.
 \end{align}
 Then, for a fixed $t_n \in (0,T)$, where $t_n=n k$, the discretization errors corresponding to the operators $ L_{\alpha/\beta}^{0,\beta},\ K_{\alpha/\beta}^{\beta-1,1-\beta}$ $G_{\alpha/\beta}^{\beta-1,1-\beta}$ can be estimated from above as below
  \begin{itemize}
     \item Integral operator
     \begin{equation}\label{EK:ErrorEstimate}
        |I_{\alpha/\beta}^{0,\beta}\phi(t_n) - L_{\alpha/\beta}^{0,\beta}\phi(t_n)| \le C\frac{t_n^\alpha}{n^{\min\{\alpha/\beta+\alpha,1\}}}.
     \end{equation}
     \item Differential operator I
     \begin{equation}\label{EKDiff:ErrorEstimate}
         |D_{\alpha/\beta}^{\beta-1,1-\beta} \psi(t_n)-G_{\alpha/\beta}^{\beta-1,1-\beta}\psi(t_n)|\le C t_n^\alpha \biggl(\frac{1}{n^{\min\{\alpha/\beta+\alpha,1\}}}+\frac{1}{n^{\max \{\beta,\alpha/\beta+\alpha-1\}}}\biggr).
     \end{equation}
     \item Differential Operator II
     \begin{equation}
        | D_{\alpha/\beta}^{\beta-1,1-\beta} \psi(t_n)-K_{\alpha/\beta}^{\beta-1,1-\beta}\psi(t_n)|\le C t_n^\alpha \biggl(\frac{1}{n^{\min\{\alpha/\beta+\alpha,1\}}}+\frac{1}{n}\biggr).
     \end{equation}
 \end{itemize}
 \end{prop}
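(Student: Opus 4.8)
The plan is to follow the decomposition used in the proof of Theorem~\ref{Thm:Order}, but to remove every step in which the Mean Value (or Intermediate Value) Theorem was used to pull a factor $\psi'(\sigma t_n)$ or $\psi''(\sigma t_n)$ out of a sum or integral. Since the derivatives are now only controlled by $|\phi'(t)|\le C t^{\alpha-1}$ and $|\psi''(t)|\le C t^{\alpha-2}$, which blow up at $t=0$, any such factorisation through a single worst-case point would place that point in the singular region and grossly overestimate the error. Instead I would keep the singular derivative \emph{inside} each subinterval sum and bound it pointwise, always separating the first subinterval $[0,1/n]$ (where the argument of the derivative can reach $0$) from the subintervals $i\ge 2$ (where it is bounded below by $(i-1)t_n/n$). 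Throughout I would write $w(x)=\frac{\alpha/\beta}{\Gamma(\beta)}x^{\alpha/\beta-1}(1-x^{\alpha/\beta})^{\beta-1}$, so that $c_{n,i}=\int_{(i-1)/n}^{i/n}w(x)\,dx$.

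For the integral operator I would first treat $i=1$: integrating the bound on $\phi'$ gives $|\phi(s)-\phi(t_1)|\le\frac{C}{\alpha}t_1^\alpha$ for $s\in[0,t_1]$, so this subinterval contributes at most $\frac{C}{\alpha}t_1^\alpha\int_0^{1/n}w=O(t_n^\alpha n^{-\alpha/\beta-\alpha})$. For $i\ge 2$ the Mean Value Theorem on each subinterval together with $|\phi'(\xi)|\le C\xi^{\alpha-1}\le C((i-1)t_n/n)^{\alpha-1}$ produces the bound $C\frac{t_n^\alpha}{n}\sum_{i\ge 2}((i-1)/n)^{\alpha-1}c_{n,i}$, a Riemann sum for $\int_0^1 x^{\alpha-1}w(x)\,dx$. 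Near $x=0$ the integrand behaves like $x^{\alpha/\beta+\alpha-2}$, so the integral converges precisely when $\alpha/\beta+\alpha>1$ (rate $1/n$) and otherwise the partial sums grow like $n^{1-\alpha/\beta-\alpha}$; combined with the $i=1$ term this yields the exponent $\min\{\alpha/\beta+\alpha,1\}$.

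For operator $G$ I would use $D-G=\beta(I-L)\psi+\frac{\beta}{\alpha}\big[t_n(\tfrac{d}{dt}-\overline{\partial})I\psi+t_n\overline{\partial}(I-L)\psi\big]$. The first summand is the integral estimate just proved. For the backward-difference term I would differentiate under the integral sign, obtaining $\frac{d^2}{dt^2}I_{\alpha/\beta}^{0,\beta}\psi(t_n)=\frac{\alpha/\beta}{\Gamma(\beta)}\int_0^1 x^{\alpha/\beta+1}(1-x^{\alpha/\beta})^{\beta-1}\psi''(xt_n)\,dx$; the growth bound on $\psi''$ reduces the $x$-integral to a convergent Beta-type integral and gives $|\frac{d^2}{dt^2}I\psi(t_n)|\le C t_n^{\alpha-2}$, whence the standard backward-difference estimate contributes $O(t_n^\alpha/n)$, absorbed by $n^{-\min\{\alpha/\beta+\alpha,1\}}$ since $\min\{\alpha/\beta+\alpha,1\}\le 1$. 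The remaining term $t_n\overline{\partial}(I-L)\psi=\frac{t_n}{k}(R_n-R_{n-1})$ is the crux: here I would reproduce the $C_n$ and $\theta_i$ bookkeeping of Theorem~\ref{Thm:Order} but retain $\psi'$ inside the sums. The two endpoint layers contribute separately—the weight singularity $(1-x)^{\beta-1}$ at the last subinterval, after the amplifying factor $t_n/k=n$, yields order $n^{-\beta}$, while the singularity of $\psi'$ at $x=0$ yields order $n^{-(\alpha/\beta+\alpha-1)}$—and the worse of the two gives the stated exponent $\max\{\beta,\alpha/\beta+\alpha-1\}$.

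Operator $K$ is handled analogously, with $D-K=\beta(I-L)\psi+\frac{\beta}{\alpha}(\text{derivative quadrature error})$; because $K$ uses the exact coefficients $d_{n,i}$ there is no discrete difference in $n$, so I would instead Taylor-expand $\psi(xt_n)$ about each node as in Theorem~\ref{Thm:Order}, now bounding the remainder with $|\psi''|\le C t^{\alpha-2}$. For $i\ge2$ this gives a Riemann sum for $\int_0^1 x^{\alpha-2}x^{\alpha/\beta}(1-x^{\alpha/\beta})^{\beta-1}\,dx$, again convergent iff $\alpha/\beta+\alpha>1$, while the first subinterval is estimated directly from the growth conditions and contributes $O(t_n^\alpha n^{-\alpha/\beta-\alpha})$, so that collecting terms produces $n^{-\min\{\alpha/\beta+\alpha,1\}}+n^{-1}$. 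I expect the genuine obstacle to be the term $\frac{t_n}{k}(R_n-R_{n-1})$ for $G$: the Intermediate Value step of Theorem~\ref{Thm:Order} that collapsed the sum into a single $\psi'(x^*k)$ is no longer admissible, so the singular factor must survive the entire $\theta_i$ expansion and the two endpoint singularities—at $x=0$ from $\psi$ and at $x=1$ from the weight—must be resolved simultaneously after the order-$n$ amplification.
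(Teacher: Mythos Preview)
Your treatment of the integral operator and of $K_{\alpha/\beta}^{\beta-1,1-\beta}$ is essentially the paper's argument: split off the first subinterval, keep the singular derivative under the sum, and observe that the resulting Riemann-type sum converges or diverges depending on whether $\alpha/\beta+\alpha$ exceeds $1$. That part is fine.

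The genuine gap is in your handling of $t_n\overline{\partial}(I-L)\psi$ for the $G$ operator. You describe the exponent $\max\{\beta,\alpha/\beta+\alpha-1\}$ as arising from two \emph{additive} endpoint contributions---one of order $n^{-\beta}$ from the weight singularity at $x=1$, one of order $n^{-(\alpha/\beta+\alpha-1)}$ from the $\psi'$ singularity at $x=0$---with ``the worse of the two'' dominating. But if the two pieces add, the total error is governed by the \emph{smaller} exponent, i.e.\ $n^{-\min\{\beta,\alpha/\beta+\alpha-1\}}$, not $n^{-\max}$. Moreover, when $\alpha/\beta+\alpha<1$ the exponent $\alpha/\beta+\alpha-1$ is negative, so the alleged contribution $n^{-(\alpha/\beta+\alpha-1)}$ would not even decay. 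The $\theta_i$ bookkeeping of Theorem~\ref{Thm:Order}, carried out with the singular $\psi'$ kept inside, gives only the $n^{-\beta}$ bound.

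The $\max$ in the statement comes from a different mechanism: the paper produces \emph{two separate upper bounds for the entire term} $t_n\overline{\partial}(I-L)\psi$, each obtained via a different telescoping. The first (your route, using $f(n,s)-f(n-1,s)$) gives $Ct_n^\alpha n^{-\beta}$ for all parameters. The second, valid when $\alpha>\beta$, rewrites the difference with the shifted kernel $f(n,s)-f(n-1,s-1)$ and an additional second-difference of $\psi$, and yields $Ct_n^\alpha n^{-(\alpha/\beta+\alpha-1)}$ in that regime. Since both are legitimate upper bounds, one may take the smaller, and because $\alpha>\beta\iff \alpha/\beta+\alpha-1>\beta$, the minimum of the two bounds is exactly $n^{-\max\{\beta,\alpha/\beta+\alpha-1\}}$. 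Your plan is missing this second decomposition; without it you cannot reach the stated exponent.
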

 \begin{proof}
Let us consider first the integral operator $I_{\alpha/\beta}^{0,\beta}$. We estimate the error similarly as in the proof of Theorem \ref{Thm:Order}. The exception is the neighbourhood of point $t=0$, where we used the fact that term $t^{1-\alpha} |\phi'(t)|$ is bounded. Hence, we have
\begin{equation}
 \begin{split}
     |I_{\alpha/\beta}^{0,\beta}\phi(t_n) - L_{\alpha/\beta}^{0,\beta}\phi(t_n)|= & \biggl| \sum\limits_{i=1}^{n} \int\limits_{\frac{i-1}{n}}^{\frac{i}{n}} \tau^{\frac{\alpha}{\beta}-1}(1-\tau^{\frac{\alpha}{\beta}})^{\beta-1}\biggl(\phi(\tau t_n)-\phi\biggl(\frac{i}{n} t_n\biggr)\biggr)\, d \tau\biggr|\\
 \le & C \int\limits_0^\frac{1}{n} \tau^{\frac{\alpha}{\beta}-1}(1-\tau^{\frac{\alpha}{\beta}})^{\beta-1}\int\limits_{\tau t_n}^k|\phi'(x)|\, d x\, d  \tau  \\
 \ & +Ct_n^{\alpha}  \frac{1}{n} \int\limits_{\frac{1}{n}}^{1} \tau^{\frac{\alpha}{\beta}-1}(1-\tau^{\frac{\alpha}{\beta}})^{\beta-1} \tau^{\alpha-1}\, d \tau \\
 \le & C t_n^\alpha\int\limits_0^\frac{1}{n} \tau^{\frac{\alpha}{\beta}-1}(1-\tau^{\frac{\alpha}{\beta}})^{\beta-1}\biggl(\frac{1}{n^\alpha}-\tau^\alpha\biggr)\, d \tau  \\
 \ & +C t_n^{\alpha}  \frac{1}{n} \int\limits_{\frac{1}{n}}^{1} \tau^{\frac{\alpha}{\beta}-1}(1-\tau^{\frac{\alpha}{\beta}})^{\beta-1} \tau^{\alpha-1}\, d \tau\le  C\frac{t_n^\alpha}{n^{\min\{\alpha/\beta+\alpha,1\}}}.
     \end{split}
\end{equation}
Considering now $G_{\alpha/\beta}^{\beta-1,1-\beta}$ and, in particular, components of the remainder $\widehat{R_n}$ we have
 \begin{equation}
 \begin{split}
     \biggl| \frac{d}{d t}I_{\alpha/\beta}^{0,\beta}\psi(t_n) - \overline{\partial}I_{\alpha/\beta}^{0,\beta}\psi(t_n)\biggr|= &  \biggl|\frac{d}{d t}I_{\alpha/\beta}^{0,\beta}\psi(t_n) - \frac{1}{k}(I_{\alpha/\beta}^{0,\beta}\psi(t_n)-I_{\alpha/\beta}^{0,\beta}\psi(t_{n-1}))\biggr| = \biggl|k \frac{d^2}{d t^2} I_{\alpha/\beta}^{0,\beta}\psi(t^{*})\biggr|\\
      \le & k \int\limits_{0}^1\tau^{\frac{\alpha}{\beta}-1}(1-\tau^{\frac{\alpha}{\beta}})^{\beta-1} \tau^2 |\psi''(\tau t^*)|\, d \tau\le \frac{C }{n} t_n^{\alpha-1}.
      \end{split}
 \end{equation}
For the second term we repeat the same steps as in Theorem \ref{Thm:Order} but now use the fact that $\psi$ satisfies \eqref{EK:CondPsi}
 \begin{equation}
 \begin{split}
      |t_n\overline{\partial}I_{\alpha/\beta}^{0,\beta}\psi(t_n)   - t_n\overline{\partial}L_{\alpha/\beta}^{0,\beta}\psi(t_n)|\le & t_n \sum\limits_{i=1}^{n-1} \frac{\alpha}{\beta \Gamma(\beta)}\int\limits_{i-1}^{i}(f(n,s)-f(n-1,s))(i-s)|\psi'(\sigma_i k)|\, d s\\
    \ & + t_n |\psi'(x_n k)|\frac{\alpha}{\beta \Gamma(\beta)}\int\limits_{n-1}^{n}f(n,s)(n-s)\, d s\\
    = & t_n \sum\limits_{i=2}^{n-1} \frac{\alpha}{\beta \Gamma(\beta)}\int\limits_{i-1}^{i}(f(n,s)-f(n-1,s))(i-s)(\sigma_i k)^{\alpha-1}\, d s\\
    \ & + t_n |\psi'(x_n k)|\frac{\alpha}{\beta \Gamma(\beta)}\int\limits_{n-1}^{n}f(n,s)(n-s)\, d s\le  C t_n^\alpha \frac{1}{n^\beta}.
      \end{split}
 \end{equation}
Note also that when $\alpha>\beta$ the above difference can  be estimated differently
 \begin{equation}
     \begin{split}
       \frac{\beta \Gamma(\beta)}{\alpha}(t_n\overline{\partial}I_{\alpha/\beta}^{0,\beta}\psi(t_n)   - & t_n\overline{\partial}L_{\alpha/\beta}^{0,\beta}\psi(t_n))=    \frac{t_n}{k} \sum\limits_{i=2}^{n} \int\limits_{i-1}^{i}(f(n,s)-f(n-1,s-1))(\psi(s k)-\psi(i k))\, d s\\
       \ & + \frac{t_n}{k} \sum\limits_{i=2}^{n-1} \int\limits_{i-1}^{i}f(n-1,s)(\psi((s+1) k)-\psi((i+1) k)-(\psi(s k)-\psi(i k)))\, d s\\
       \ & +  \frac{t_n}{k} \int\limits_{0}^{1}f(n-1,s)(\psi((s+1) k)-\psi((i+1) k)-(\psi(s k)-\psi(i k)))\, d s\\
       \ & + \frac{t_n}{k}\int\limits_{0}^{1}(f(n,s))(\psi(s k)-\psi(i k))\ d s.
     \end{split}
 \end{equation}
 Performing straightforward calculation we obtain
 \begin{equation}
 \begin{split}
     \frac{t_n}{k} \sum\limits_{i=2}^{n} \int\limits_{i-1}^{i}(f(n,s)-f(n-1,s-1))|\psi(s k)- \psi(i k)|\, d s \le &   t_n k^{\alpha-1} \sum\limits_{i=2}^{n} \int\limits_{i-1}^{i}(f(n,s)-f(n-1,s-1))\, d s\\
     \le & C \frac{t_n^\alpha}{n^{\alpha/\beta+\alpha-1}},
     \end{split}
 \end{equation}
 and
 \begin{align}
 \frac{t_n}{k} \int\limits_{0}^{1}f(n-1,s)|\psi((s+1) k)-\psi((i+1) k)-(\psi(s k)-\psi(i k))|\, d s &\le C \frac{t_n^\alpha}{n^{\alpha/\beta+\alpha-1}},\\
    \frac{t_n}{k}\int\limits_{0}^{1}(f(n,s))|\psi(s k)-\psi(i k)|\ d s&\le C \frac{t_n^\alpha}{n^{\alpha/\beta+\alpha-1}}
    .
 \end{align}
 and
 \begin{equation}
    \frac{t_n}{k} \sum\limits_{i=2}^{n-1} \int\limits_{i-1}^{i}f(n-1,s)|\psi((s+1) k)-\psi((i+1) k)-(\psi(s k)-\psi(i k))|\, d s\le C \biggl(\frac{t_n}{n^{\alpha/\beta+\alpha-1}}+\frac{t_n^\alpha}{n}\biggr).
 \end{equation}
  To obtain appropriate discretization error estimates for the operator $K_{\alpha/\beta}^{0,\beta}$ we follow the same steps as above using the condition $t^{\alpha-2}|\psi''(t)|\le C$. The proof is complete.
 \end{proof}
\section{Stability}
\label{Results}
Using the proper discretization of the Erd\'elyi--Kober fractional derivative we can rewrite \eqref{Eq:WeakForm} in a semi-discrete form. To that end, let $U^n \in H^1(\mathbb{R})$ for all $n\in \mathbb{N}$, be the solution of the semi-discrete problem
\begin{equation}\label{TimeDiscreteEq}
   \begin{array}{l}
         (\overline{\partial}U^n,\chi)=-\frac{\alpha}{\beta}t_n^{\alpha-1} (G_{\alpha/\beta}^{\beta-1,1-\beta} U_{x}^n,\chi_x),\quad \forall \chi \in H^1(\mathbb{R}),
    \end{array}
\end{equation}
with $U^0=u(x,0)$.
Before we proceed to the main result of this section we will prove certain properties of the coefficients $c_{n,i}$ that appear in $L_{\alpha/\beta}^{\beta-1,1-\beta}$.
\begin{lem}
Fix $0<\alpha,\beta<1$ and $n>1$. For each $i\in \{1,2,\ldots,n-1\}$, we have
\begin{equation}\label{Lem:Cni}
    (n+\alpha) c_{n,i}-n c_{n-1,i}<0,
\end{equation}
where $c_{n,i}$ is defined as in \eqref{CoeffDef}.
\end{lem}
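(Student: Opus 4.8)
The plan is to factor out the positive constant $1/(\beta\Gamma(\beta))$ and encode both coefficients through the single strictly decreasing profile $F(x):=(1-x^{\alpha/\beta})^\beta$, $x\in[0,1]$, so that $c_{n,i}=\frac{1}{\beta\Gamma(\beta)}\bigl(F(\tfrac{i-1}{n})-F(\tfrac{i}{n})\bigr)$. Introducing the interpolating function $\phi(s):=F(\tfrac{i-1}{s})-F(\tfrac{i}{s})$ of a \emph{real} variable $s\ge i$, the claim \eqref{Lem:Cni} is equivalent to $(n+\alpha)\phi(n)<n\,\phi(n-1)$, i.e.\ to the ratio bound $\phi(n)/\phi(n-1)<n/(n+\alpha)$. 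Since $F$ is strictly decreasing and $\tfrac{i-1}{s}<\tfrac{i}{s}\le 1$, we have $\phi(s)>0$ throughout, so passing to logarithms is legitimate.

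The heart of the argument will be a differential inequality for $\phi$ in the continuous variable, namely $\frac{d}{ds}\ln\phi(s)<-\alpha/s$, equivalently $s\phi'(s)+\alpha\phi(s)<0$ on $(n-1,n)$. Differentiating and using $iF'(i/s)=s\rho(i/s)$ with $\rho(x):=xF'(x)$ gives $s\phi'(s)=\rho(\tfrac{i}{s})-\rho(\tfrac{i-1}{s})$, and hence
\begin{equation}
  s\phi'(s)+\alpha\phi(s)=\Lambda\Bigl(\tfrac{i}{s}\Bigr)-\Lambda\Bigl(\tfrac{i-1}{s}\Bigr),\qquad \Lambda(x):=xF'(x)-\alpha F(x).
\end{equation}
The key computation is that the two pieces of $\Lambda$ collapse: a direct calculation yields $\Lambda(x)=-\alpha(1-x^{\alpha/\beta})^{\beta-1}$. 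Because $0<\beta<1$ the exponent $\beta-1$ is negative, so $x\mapsto(1-x^{\alpha/\beta})^{\beta-1}$ is increasing and therefore $\Lambda$ is strictly decreasing on $(0,1)$. As $\tfrac{i-1}{s}<\tfrac{i}{s}$, this gives $\Lambda(\tfrac{i}{s})<\Lambda(\tfrac{i-1}{s})$, whence $s\phi'(s)+\alpha\phi(s)<0$.

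Integrating $\frac{d}{ds}\ln\phi(s)<-\alpha/s$ over $[n-1,n]$ then produces the strict bound $\phi(n)/\phi(n-1)<\bigl(\tfrac{n-1}{n}\bigr)^\alpha$, and it remains only to check the elementary inequality $\bigl(\tfrac{n-1}{n}\bigr)^\alpha\le \tfrac{n}{n+\alpha}$. This follows immediately from $\ln(1+x)\le x$ applied to $x=\alpha/n$ together with $\ln(1-x)\le -x$ applied to $x=1/n$, which combine to give $\ln(1+\alpha/n)+\alpha\ln(1-1/n)\le 0$. Chaining the two estimates yields $(n+\alpha)\phi(n)<n\,\phi(n-1)$, and multiplying by $1/(\beta\Gamma(\beta))>0$ gives \eqref{Lem:Cni}.

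I expect the main obstacle to be locating the right ``Lyapunov-type'' combination $s\phi'+\alpha\phi$ and carrying out the cancellation that collapses $\Lambda$ to the single clean term $-\alpha(1-x^{\alpha/\beta})^{\beta-1}$; once this monotonicity is in hand the rest is routine. One technical point deserves care: when $i=n-1$ the argument $i/s$ reaches $1$ at the left endpoint $s=n-1$, where $F'$ (and hence $\phi'$) blows up. However, near $x=1$ one has $(1-x^{\alpha/\beta})^{\beta-1}=\mathcal{O}\bigl((1-x)^{\beta-1}\bigr)$ with $\beta-1>-1$, so $\phi'$ remains integrable while $\phi$ stays continuous and strictly positive up to $s=n-1$; the integration of $\frac{d}{ds}\ln\phi$ over $[n-1,n]$ is therefore still valid as a convergent improper integral, and the strict inequality is preserved.
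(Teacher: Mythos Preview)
Your proof is correct and takes a genuinely different route from the paper's.

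The paper argues pointwise on the integrand: writing
\[
(n+\alpha)c_{n,i}-n c_{n-1,i}=\frac{\alpha}{\beta\Gamma(\beta)}\int_{i-1}^{i} f(\tau,n)\,d\tau
\]
for an explicit $f$, it observes that $f(\tau,n)\to-\infty$ as $\tau\to n-1$, and then rules out any zero of $f(\cdot,n)$ on $(0,n-1)$ by rearranging $f(\tau^*,n)=0$ into an equation whose left side is strictly increasing in $\tau^*$ while the right side is constant, and showing (via an auxiliary function $g(n)$ that is decreasing with limit $0$) that the left side already exceeds the right at $\tau^*=0$. Continuity then forces $f<0$ everywhere, and the integral is negative.

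Your approach instead interpolates in the \emph{mesh} variable: with $F(x)=(1-x^{\alpha/\beta})^\beta$ and $\phi(s)=F((i-1)/s)-F(i/s)$ you derive the differential inequality $(\ln\phi)'<-\alpha/s$ from the clean collapse $xF'(x)-\alpha F(x)=-\alpha(1-x^{\alpha/\beta})^{\beta-1}$, integrate to obtain the sharper bound $\phi(n)/\phi(n-1)<((n-1)/n)^\alpha$, and close with the elementary inequality $((n-1)/n)^\alpha\le n/(n+\alpha)$. This is more computational up front but arguably more transparent once the identity for $\Lambda$ is seen, and it sidesteps the monotonicity analysis of $g(n)$ that the paper only states. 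The paper's argument, on the other hand, stays with the original integral representation and requires no interpolation or improper-integral justification at the edge $i=n-1$. Your handling of that boundary case (integrability of the $(1-x)^{\beta-1}$ singularity, continuity and positivity of $\phi$ at $s=n-1$) is correct.
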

\begin{proof}
Let us note that using standard variable change for the integral in $c_{n,i}$ we obtain
\begin{equation}
    c_{n,i}=\frac{\alpha}{\beta \Gamma(\beta)}\int\limits_{\frac{i-1}{n}}^{\frac{i}{n}} \tau^{\frac{\alpha}{\beta}-1} (1-\tau^{\frac{\alpha}{\beta}})^{\beta-1}\, d \tau=\frac{\alpha}{\beta \Gamma(\beta)}\int\limits_{i-1}^{i}\frac{1}{n} \biggl(\frac{\tau}{n}\biggr)^{\frac{\alpha}{\beta}-1} \biggl(1-\biggl(\frac{\tau}{n}\biggr)^{\frac{\alpha}{\beta}}\biggr)^{\beta-1}\, d \tau.
\end{equation}
Then, we use above to rewrite the term $(n+\alpha)c_{n,i} - n c_{n-1,i}$ in a more tractable form
\begin{equation}
    \begin{split}
      (n+\alpha)c_{n,i} - n c_{n-1,i}= &  \frac{\alpha}{\beta \Gamma(\beta)}\int\limits_{i-1}^{i}\biggl[\frac{n+\alpha}{n} \biggl(\frac{\tau}{n}\biggr)^{\frac{\alpha}{\beta}-1} \biggl(1-\biggl(\frac{\tau}{n}\biggr)^{\frac{\alpha}{\beta}}\biggr)^{\beta-1}\\
      \ & -\frac{n}{n-1} \biggl(\frac{\tau}{n-1}\biggr)^{\frac{\alpha}{\beta}-1} \biggl(1-\biggl(\frac{\tau}{n-1}\biggr)^{\frac{\alpha}{\beta}}\biggr)^{\beta-1}\biggr]\, d \tau\\
      =&\frac{\alpha}{\beta \Gamma(\beta)}\int\limits_{i-1}^{i}\biggl[\biggl(1+\frac{\alpha}{n}\biggr) \bigl(\frac{\tau}{n}\bigr)^{\frac{\alpha}{\beta}-1} \biggl(1-\bigl(\frac{\tau}{n}\bigr)^{\frac{\alpha}{\beta}}\biggr)^{\beta-1}\\
      \ & -\biggl(1+\frac{1}{n-1}\biggr)\bigl(\frac{\tau}{n-1}\bigr)^{\frac{\alpha}{\beta}-1} \biggl(1-\bigl(\frac{\tau}{n-1}\bigr)^{\frac{\alpha}{\beta}}\biggr)^{\beta-1}\biggr]\, d \tau.
    \end{split}
\end{equation}
Next, for $\tau\in [0,n-1]$ we introduce the auxiliary function
\begin{equation}
    f(\tau,n)=\biggl(1+\frac{\alpha}{n}\biggr) \biggl(\frac{\tau}{n}\biggr)^{\frac{\alpha}{\beta}-1} \biggl(1-\biggl(\frac{\tau}{n}\biggr)^{\frac{\alpha}{\beta}}\biggr)^{\beta-1}-\biggl(1+\frac{1}{n-1}\biggr)\biggl(\frac{\tau}{n-1}\biggr)^{\frac{\alpha}{\beta}-1} \biggl(1-\biggl(\frac{\tau}{n-1}\biggr)^{\frac{\alpha}{\beta}}\biggr)^{\beta-1}.
\end{equation}
After carefull observation we note that $\lim _{\tau\to n-1}f(\tau,n)=-\infty$. Let us assume now that there exists some  $\tau^*>0$ that satisfies $f(\tau^*,n)=0$. Hence,
\begin{equation}
    \biggl(1+\frac{\alpha}{n}\biggr) \biggl(\frac{\tau^*}{n}\biggr)^{\frac{\alpha}{\beta}-1} \biggl(1-\biggl(\frac{\tau^*}{n}\biggr)^{\frac{\alpha}{\beta}}\biggr)^{\beta-1}=\biggl(1+\frac{1}{n-1}\biggr)\biggl(\frac{\tau^*}{n-1}\biggr)^{\frac{\alpha}{\beta}-1} \biggl(1-\biggl(\frac{\tau^*}{n-1}\biggr)^{\frac{\alpha}{\beta}}\biggr)^{\beta-1}
\end{equation}
or
\begin{equation}
    \frac{n^{\frac{\alpha}{\beta}}-\tau^{*\frac{\alpha}{\beta}}}{(n-1)^{\frac{\alpha}{\beta}}-\tau^{*\frac{\alpha}{\beta}}}=\biggl(\frac{n^{\alpha+1}}{(n-1)^\alpha (n+\alpha)}\biggr)^{\frac{1}{\beta-1}}.
\end{equation}
In the last equation, only the expression on left-hand side depends on $\tau^*$. We perform a straightforward calculation and obtain that the expression on the left is an increasing function with respect to $\tau^*$. Next, one can also notice that 
\begin{equation}
    g(n)\coloneqq \frac{n^{\frac{\alpha}{\beta}}}{(n-1)^{\frac{\alpha}{\beta}}}-\biggl(\frac{n^{\alpha+1}}{(n-1)^\alpha (n+\alpha)}\biggr)^{\frac{1}{\beta-1}},
\end{equation}
is decreasing function of $n$ and $\lim_{n\to \infty}g(n)=0$. Therefore, for each $n> 1$ we have $g(n)>0$ and thus there is no $\tau^*$ such that $f(\tau^*,n)=0$. Based on the above analysis we conclude \eqref{Lem:Cni}.
\end{proof}
Furthermore, in proving the convergence of the approximate solution to the exact one we will use the result presented in \cite{li2018} (Lemma 2) and originally proved in \cite{heywood1990}.
\begin{lem}[\cite{heywood1990}]\label{LemmaEstimate}
Let $\tau,\ B$ and $a_i,\ b_i, \ c_i, \ \gamma_i,$ for $i\in \mathbb{N}$, be nonnegative numbers such that
\begin{equation}
    a_n +\tau \sum\limits_{i=0}^n b_i\le \tau \sum\limits_{i=0}^n \gamma_i a_k +\tau \sum\limits_{i=0}^n c_i+B, \quad \mathrm{for}\ \ n\ge 0.
\end{equation}
Suppose that  $\tau \gamma_i<1$, for all $i$, and set $\sigma_i=(1-\tau \gamma_i)^{-1}$. Then
\begin{equation}
    a_n +\tau\sum\limits_{i=1}^n b_i\le (\tau \sum\limits_{i=1}^n c_i+B)\exp{(\tau \sum\limits_{i=1}^n\gamma_i \sigma_i)}.
\end{equation}
\end{lem}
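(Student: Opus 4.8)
The plan is to prove this discrete Gronwall inequality by induction on $n$, after first absorbing the diagonal term $\tau\gamma_n a_n$ into the left-hand side. Since the result is quoted from \cite{heywood1990}, I will only sketch the argument and not reproduce every elementary estimate.

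First I would isolate the $i=n$ summand $\tau\gamma_n a_n$ on the right, move it to the left, and use the hypothesis $\tau\gamma_n<1$ to divide by $1-\tau\gamma_n$. Writing $\sigma_n=(1-\tau\gamma_n)^{-1}\ge 1$ and using $b_i\ge 0$ (so that multiplying the $b$-sum by $\sigma_n\ge 1$ only weakens the inequality), this reduces the hypothesis to the recursion
\begin{equation}
a_n+\tau\sum_{i=0}^n b_i\le \sigma_n\left(\tau\sum_{i=0}^{n-1}\gamma_i a_i+\tau\sum_{i=0}^n c_i+B\right).
\end{equation}
Introducing $y_n\coloneqq a_n+\tau\sum_{i=1}^n b_i$ and $G_n\coloneqq \tau\sum_{i=1}^n c_i+B$, and bounding $a_i\le y_i$ in the feedback sum (valid since the $b_i$ are nonnegative), it then suffices to establish the product estimate $y_n\le G_n\prod_{i=1}^n\sigma_i$.

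The core of the argument is the induction proving this product estimate. The key algebraic fact is the identity $\sigma_i-1=\tau\gamma_i\sigma_i$, immediate from the definition of $\sigma_i$, together with its telescoped form
\begin{equation}
\prod_{j=1}^{n-1}\sigma_j-1=\tau\sum_{i=1}^{n-1}\gamma_i\prod_{j=1}^i\sigma_j.
\end{equation}
Assuming the estimate for all indices below $n$ and using the monotonicity $G_i\le G_n$ together with $\sigma_j\ge 1$, I would bound the feedback term $\tau\sum_{i=1}^{n-1}\gamma_i y_i$ by $G_n\bigl(\prod_{j=1}^{n-1}\sigma_j-1\bigr)$ via the telescoped identity; substituting into the recursion and multiplying by $\sigma_n$ then closes the induction \emph{exactly}, producing $y_n\le \sigma_n G_n\prod_{j=1}^{n-1}\sigma_j=G_n\prod_{j=1}^n\sigma_j$.

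Finally, I would pass from the product to the exponential using the elementary inequality $1+x\le e^x$: since $\sigma_i=1+\tau\gamma_i\sigma_i\le \exp(\tau\gamma_i\sigma_i)$, the product obeys $\prod_{i=1}^n\sigma_i\le \exp\bigl(\tau\sum_{i=1}^n\gamma_i\sigma_i\bigr)$, which yields the claimed bound on $y_n=a_n+\tau\sum_{i=1}^n b_i$. I expect the main obstacle to be the self-referential (Gronwall) structure, in which the unknowns $a_i$ appear on both sides: the work lies in choosing the right inductive quantity $y_n$ with $G_n$ monotone and in exploiting the exact telescoping identity so that the accumulated product neither over- nor under-counts. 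A secondary point requiring care is the bookkeeping of the index shift between the $0$-based sums in the hypothesis and the $1$-based sums in the conclusion, which amounts to absorbing the $i=0$ contribution into the base case and the constant $B$.
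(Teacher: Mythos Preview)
The paper does not give its own proof of this lemma: it is stated as a quotation from \cite{heywood1990} (via \cite{li2018}) and used as a black box in the convergence analysis. There is therefore nothing in the paper to compare your argument against; your sketch is the standard discrete Gronwall proof and is correct in outline, so it is entirely adequate here.
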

Now, let us consider the stability of the equation \eqref{TimeDiscreteEq}
\begin{thm}
Suppose that the problem \eqref{Eq:WeakForm} has a unique solution for $0<\alpha,\beta<1$. Then the time-discrete problem \eqref{TimeDiscreteEq} has a unique solution $U^n$. Moreover, we have
\begin{equation}
\label{StabilityIneq}
    \|U^n\|\le 2^{1/2} \|U^0\|,\quad n\in\mathbb{N}.
\end{equation}
\end{thm}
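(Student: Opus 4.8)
The plan is to combine an elliptic solvability argument at each time level with a discrete energy estimate and to close the recursion with the discrete Gronwall inequality of Lemma \ref{LemmaEstimate}. First I would rewrite the discrete operator explicitly. Since $t_n=nk$, one has $\frac{\beta}{\alpha}t_n\overline{\partial}\,L_{\alpha/\beta}^{0,\beta}\phi(t_n)=\frac{\beta n}{\alpha}\big(L_{\alpha/\beta}^{0,\beta}\phi(t_n)-L_{\alpha/\beta}^{0,\beta}\phi(t_{n-1})\big)$, so that $G_{\alpha/\beta}^{\beta-1,1-\beta}U_x^n=\sum_{i=1}^n w_{n,i}U_x^i$ with $w_{n,n}=\frac{\beta(n+\alpha)}{\alpha}c_{n,n}=:a_n$ and $w_{n,i}=\frac{\beta}{\alpha}\big((n+\alpha)c_{n,i}-n\,c_{n-1,i}\big)$ for $i<n$. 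Two facts about these weights are the backbone of the proof: $a_n>0$ because $c_{n,n}>0$, while $w_{n,i}<0$ for $1\le i\le n-1$ by the preceding lemma (inequality \eqref{Lem:Cni}); and, using the telescoping identity $\sum_{i=1}^m c_{m,i}=\frac{1}{\beta\Gamma(\beta)}$ valid for every $m$, the weights sum to $\sum_{i=1}^n w_{n,i}=\frac{1}{\Gamma(\beta)}$. Consequently $\sum_{i=1}^{n-1}|w_{n,i}|=a_n-\frac{1}{\Gamma(\beta)}<a_n$, i.e. the diagonal weight strictly dominates the total mass of the memory weights.

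For existence and uniqueness I would isolate the $n$-th level: \eqref{TimeDiscreteEq} reads $\frac{1}{k}(U^n,\chi)+\frac{\alpha}{\beta}t_n^{\alpha-1}a_n(U_x^n,\chi_x)=\frac{1}{k}(U^{n-1},\chi)-\frac{\alpha}{\beta}t_n^{\alpha-1}\sum_{i<n}w_{n,i}(U_x^i,\chi_x)$, whose right-hand side is a bounded linear functional of $\chi\in H^1(\mathbb{R})$ once $U^0,\dots,U^{n-1}$ are known. The left-hand bilinear form is bounded and, because $a_n>0$, coercive on $H^1(\mathbb{R})$; Lax--Milgram then yields a unique $U^n$, and induction on $n$ gives solvability for all $n$.

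For the stability bound I would test with $\chi=U^n$. Writing $(\overline{\partial}U^n,U^n)=\frac{1}{2k}\big(\|U^n\|^2-\|U^{n-1}\|^2+\|U^n-U^{n-1}\|^2\big)$ and moving the diagonal dissipation $\frac{\alpha}{\beta}t_n^{\alpha-1}a_n\|U_x^n\|^2$ to the left, the identity becomes $\frac{1}{2k}(\|U^n\|^2-\|U^{n-1}\|^2)+\frac{\alpha}{\beta}t_n^{\alpha-1}a_n\|U_x^n\|^2\le \frac{\alpha}{\beta}t_n^{\alpha-1}\sum_{i<n}|w_{n,i}|\,(U_x^i,U_x^n)$. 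Applying Cauchy--Schwarz and Young's inequality to each cross term and using $\sum_{i<n}|w_{n,i}|<a_n$ to absorb the $\|U_x^n\|^2$ part into the left-hand dissipation leaves, after multiplying by $2k$ and summing over $n$, an inequality in which the accumulated memory appears as $\frac{k\alpha}{\beta}\sum_{i}\|U_x^i\|^2\sum_{n>i}t_n^{\alpha-1}|w_{n,i}|$.

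The hard part is this memory term. Because the equation lives on all of $\mathbb{R}$ there is no Poincar\'e inequality, so the gradient seminorms $\|U_x^i\|^2$ cannot be controlled by $\|U^i\|^2$ and cannot be fed directly into Lemma \ref{LemmaEstimate}. The key estimate to establish is therefore a column-by-column domination of the weights: for each fixed $i$ the accumulated kernel $\sum_{n>i}t_n^{\alpha-1}|w_{n,i}|$ must be controlled by the local dissipation coefficient $t_i^{\alpha-1}\big(a_i+\frac{1}{\Gamma(\beta)}\big)$. This is where the decay $w_{n,i}=\mathcal{O}(1/n)$ (read off from the integral representation of $c_{n,i}$ together with $c_{n,i}-c_{n-1,i}=\mathcal{O}(1/n^2)$) enters, alongside the summability $\sum_n t_n^{\alpha-1}n^{-1}\sim k^{\alpha-1}\sum_n n^{\alpha-2}<\infty$, which holds precisely because $0<\alpha<1$. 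If the domination were exact the dissipation would absorb the whole memory and one would even obtain $\|U^n\|\le\|U^0\|$; in general a bounded residual survives, and recasting it so that Lemma \ref{LemmaEstimate} applies with $a_n=\|U^n\|^2$ and $B=\|U^0\|^2$ produces an exponential prefactor controlled by the finite total kernel mass, giving the stated constant $2^{1/2}$ in \eqref{StabilityIneq}. I expect this weight comparison, rather than the energy identity or the solvability step, to be the decisive technical obstacle.
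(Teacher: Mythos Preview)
Your identification of the weight structure is correct: $w_{n,n}=a_n>0$, $w_{n,i}<0$ for $i<n$ by \eqref{Lem:Cni}, and $\sum_i w_{n,i}=1/\Gamma(\beta)$. The Lax--Milgram step is fine. The gap is in how you handle the gradient history after Young's inequality.

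Your plan is to sum the energy inequality in $n$, compare the accumulated memory $\sum_{n>i}t_n^{\alpha-1}|w_{n,i}|$ against the accumulated dissipation column by column, and treat any leftover by the discrete Gronwall Lemma~\ref{LemmaEstimate}. But that lemma acts on a sequence $a_n$ of nonnegative scalars; the residual you would be left with after an inexact column domination still consists of $\|U_x^i\|^2$ terms, and as you yourself note there is no Poincar\'e inequality on $\mathbb{R}$ to convert these into $\|U^i\|^2$. So the Gronwall step cannot close, and you have not shown that the column domination is in fact exact. You correctly flag this as the ``decisive technical obstacle,'' but you do not resolve it. Moreover, an exponential Gronwall factor would not produce the specific constant $2^{1/2}$.

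The paper avoids this difficulty entirely by \emph{not} summing in $n$ and \emph{not} invoking Lemma~\ref{LemmaEstimate} (that lemma is used only in the convergence proof). Instead, after applying Young's inequality one rearranges so that the gradient history becomes part of a single energy
\[
E^n \;=\; \|U^n\|^2 + \Bigl(1+\tfrac{\alpha}{n}\Bigr)t_n^{\alpha}\sum_{i=1}^n c_{n,i}\,\|U_x^i\|^2,
\]
and the inequality reads $E^n\le E^{n-1}+\bigl[t_n^{\alpha}-t_{n-1}^{\alpha}(1+\tfrac{\alpha}{n-1})\bigr]\sum_{i<n}c_{n-1,i}\|U_x^i\|^2$. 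The bracket is nonpositive by concavity of $t\mapsto t^{\alpha}$ (equivalently $n^{\alpha}\le (n-1)^{\alpha}+\alpha(n-1)^{\alpha-1}$), so $E^n\le E^{n-1}\le\cdots\le E^1$. The constant $2^{1/2}$ then falls out of the explicit $n=1$ computation: testing \eqref{Stability:MainInEq} at $n=1$ gives $\|U^1\|^2\le\|U^0\|^2$ and $t_1^{\alpha}\|U_x^1\|^2/\Gamma(\beta+1)\le\|U^0\|^2/2$, whence $E^1\le(1+\tfrac{1+\alpha}{2})\|U^0\|^2\le 2\|U^0\|^2$. The point you are missing is this monotone energy functional; once you have it, no summation, no column estimate and no Gronwall are needed.
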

\begin{proof}
From the standard theorem of the elliptic linear equations we have that for fixed $n$ \eqref{TimeDiscreteEq} possesses a unique solution. Next, let us choose $\chi=U^n$ in \eqref{TimeDiscreteEq} and  use
\begin{equation}
    (\overline{\partial}U^n,U^n)\ge \frac{1}{2} \overline{\partial}\|U^n\|^2,
\end{equation}
to obtain
\begin{equation}\label{eq1}
         \|U^n\|^2\le \|U^{n-1}\|^2  -2 k\frac{\alpha}{\beta}t_n^{\alpha-1} (G_{\alpha/\beta}^{\beta-1,1-\beta} U_{x}^n,\chi_x).
\end{equation}
We use the definition of the discrete operator $G_{\alpha/\beta}^{\beta-1,1-\beta}$ to rewrite the above equation in the following form
\begin{align*}
    \|U^n\|^2\le \|U^{n-1}\|^2  -2 k\frac{\alpha}{\beta}t_n^{\alpha-1} \biggl[ \beta (L_{\alpha/\beta}^{\beta-1,1-\beta} U_{x}^n,U_x^n) + \frac{\beta}{\alpha}\frac{t_n}{k}\biggl( (L_{\alpha/\beta}^{\beta-1,1-\beta} U_{x}^n,U_x^n)-(L_{\alpha/\beta}^{\beta-1,1-\beta} U_{x}^{n-1},U_x^n)\biggl)\biggl],
\end{align*}
or
\begin{align}\label{Stability:MainInEq}
    \|U^n\|^2\le \|U^{n-1}\|^2  -2 k\frac{\alpha}{\beta}t_n^{\alpha-1}  (\sum\limits_{i=1}^{n-1}(\beta c_{n,i}+\frac{\beta}{\alpha}\frac{t_n}{k} (c_{n,i}-c_{n-1,i})) U_{x}^i,U_x^n)- 2 k \frac{\alpha}{\beta} t_n^{\alpha-1} (\beta c_{n,n} +\frac{\beta}{\alpha}\frac{t_n}{k} c_{n,n})\|U_x^n\|^2.
\end{align}
In the next step we use \eqref{Lem:Cni} and get
\begin{equation}
\begin{aligned}
    \|U^n\|^2\le & \|U^{n-1}\|^2  +2 k\frac{\alpha}{\beta}t_n^{\alpha-1}  \sum\limits_{i=1}^{n-1}(-\beta c_{n,i}+\frac{\beta}{\alpha}\frac{t_n}{k} (c_{n-1,i}-c_{n,i}))\| U_{x}^i\|\cdot\|U_x^n\|\\
    \ & - 2 k \frac{\alpha}{\beta} t_n^{\alpha-1} (\beta c_{n,n} +\frac{\beta}{\alpha}\frac{t_n}{k} c_{n,n})\|U_x^n\|^2,
    \end{aligned}
    \end{equation}
    \begin{equation}
    \begin{aligned}
    \|U^n\|^2 \le & \|U^{n-1}\|^2  + k\frac{\alpha}{\beta}t_n^{\alpha-1}  \sum\limits_{i=1}^{n-1}(-\beta c_{n,i}+\frac{\beta}{\alpha}\frac{t_n}{k} (c_{n-1,i}-c_{n,i}))(\| U_{x}^i\|^2+\|U_x^n\|^2)- 2  t_n^{\alpha}  c_{n,n}\|U_x^n\|^2\\
    \ & -2\alpha k t_n^{\alpha-1} c_{n,n} \|U_x^n\|^2,
    \end{aligned}
    \end{equation}
    \begin{align}
    \|U^n\|^2\le & \|U^{n-1}\|^2-\alpha k t_n^{\alpha-1}\sum\limits_{i=1}^n c_{n,i} \|U_x^i\|^2 -t_n^{\alpha}\sum\limits_{i=1}^n c_{n,i} \|U_x^i\|^2 + t_n^{\alpha}\sum\limits_{i=1}^{n-1} c_{n-1,i} \|U_x^i\|^2 -\frac{\alpha k t_n^{\alpha-1}}{\beta \Gamma(\beta)} \|U_x^n\|^2,
    \end{align}
    \begin{equation}
        \begin{aligned}
           \|U^n\|^2 + \biggl(1+\frac{\alpha}{n}\biggr)t_n^\alpha\sum\limits_{i=1}^n c_{n,i} \|U_x^i\|^2 \le &  \|U^{n-1}\|^2  + t_{n-1}^{\alpha} \biggl(1+\frac{\alpha}{n-1}\biggr) \sum\limits_{i=1}^{n-1} c_{n-1,i}\| U_{x}^i\|^2 \\
    \ & + \biggl(t_n^\alpha-t_{n-1}^{\alpha} \biggl(1+\frac{\alpha}{n-1}\biggr)\biggr)  \sum\limits_{i=1}^{n-1} c_{n-1,i}\| U_{x}^i\|^2. 
    \label{Stability:SystemEq} 
        \end{aligned}
    \end{equation}
Let us define
\begin{equation}\label{Def:En}
    E^n\coloneqq\|U^n\|^2 + \biggl(1+\frac{\alpha}{n}\biggr) t_n^\alpha\sum\limits_{i=1}^n c_{n,i} \|U_x^i\|^2.
\end{equation}
Then, using the notation $E^n$ we rewrite the last inequality  \eqref{Stability:SystemEq} in following manner
\begin{equation}
    E^n\le E^{n-1}+\biggl(t_n^\alpha-t_{n-1}^{\alpha} \biggl(1+\frac{\alpha}{n-1}\biggr)\biggr)  \sum\limits_{i=1}^{n-1} c_{n-1,i}\| U_{x}^i\|^2 \le E^{n-1}.
\end{equation}
Finally, we have
\begin{equation}
    E^n\le E^{1}=\|U^1\|^2+\frac{(1+\alpha)}{ \Gamma(\beta+1)} t_1^\alpha\|U_x^1\|^2.
\end{equation}
Now, let us consider inequality \eqref{Stability:MainInEq} with $n=1$ separately
\begin{equation}
    \begin{split}
        \|U^1\|^2\le & \|U^{0}\|^2 -\frac{2\alpha k t_1^{\alpha-1}}{\beta \Gamma(\beta)} \|U_x^1\|^2-\frac{2 t_1^{\alpha}}{\beta\Gamma(\beta)}\|U^1\|^2.
    \end{split}
\end{equation}
From above inequality we immediately get $\|U^1\|^2\le \|U^0\|^2$ and $t_1^\alpha\|U_x^1\|^2/\Gamma(\beta+1)\le \|U^0\|^2/2$. Hence, using these inequalities we get
\begin{equation}
    E^n\le \biggl(1 + \frac{1+\alpha}{2}\biggr)\|U_0\|^2\le 2\|U_0\|^2. 
\end{equation}
From the above inequalities and \eqref{Def:En} we conclude \eqref{StabilityIneq}.
\end{proof}
We are ready to prove the theorem concerning convergence of the error for semi-discrete equation \eqref{TimeDiscreteEq} when the time step $k$ approaches zero.
\begin{thm}\label{Thm:EqOrder}
Suppose that the problem \eqref{Eq:WeakForm} has a unique solution $u(t,x)$ for $0<\alpha,\beta<1$, with $u_0(x)\in H^2(\mathbb{R})$, such that
\begin{align}
    \|u\|+t^{1-\alpha}\|u_t\|+t^{2-\alpha} \|u_{t t}\|\le & C,\\
     \|u_x\|+t^{1-\alpha}\|u_{tx}\|+t^{2-\alpha} \|u_{ttx}\|\le & C.
\end{align}
Then, there exist positive constant $D$ such that
 \begin{equation}
     \|U^n-u(t_n)\|\le D\left\{\begin{array}{cl}
          k^{\alpha-\mu/2}& \text{for}\quad \alpha\ne \beta,   \\
         k^{\alpha} ((-\log k)^{1/2}+k^{\mu/2})& \text{for}\quad \alpha=\beta,
     \end{array}\right.
 \end{equation}
 where $\mu>0$ is arbitrary small.
\end{thm}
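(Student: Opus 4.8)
The plan is to control the error $e^n\coloneqq U^n-u(t_n)$, which starts from $e^0=0$ since $U^0=u(0)$. First I would produce an error equation: evaluate the exact weak form \eqref{Eq:WeakForm} at $t=t_n$, add and subtract $\overline{\partial}u(t_n)$ in the time derivative, and subtract the result from the semi-discrete scheme \eqref{TimeDiscreteEq}. Splitting
\[
G_{\alpha/\beta}^{\beta-1,1-\beta}U_x^n-D_{\alpha/\beta}^{\beta-1,1-\beta}u_x(t_n)=G_{\alpha/\beta}^{\beta-1,1-\beta}e_x^n+\rho_2^n
\]
isolates exactly two consistency sources: the temporal truncation $\rho_1^n\coloneqq\overline{\partial}u(t_n)-u_t(t_n)$ and the operator-discretization error $\rho_2^n\coloneqq(G_{\alpha/\beta}^{\beta-1,1-\beta}-D_{\alpha/\beta}^{\beta-1,1-\beta})u_x(t_n)$, the latter being precisely what Proposition \ref{PropSingular} estimates. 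Testing with $\chi=e^n$ then gives
\begin{equation*}
(\overline{\partial}e^n,e^n)=-\frac{\alpha}{\beta}t_n^{\alpha-1}(G_{\alpha/\beta}^{\beta-1,1-\beta}e_x^n,e_x^n)-\frac{\alpha}{\beta}t_n^{\alpha-1}(\rho_2^n,e_x^n)-(\rho_1^n,e^n).
\end{equation*}

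Next I would replay the stability proof on the homogeneous part. Using $(\overline{\partial}e^n,e^n)\ge\tfrac12\overline{\partial}\|e^n\|^2$, the sign property \eqref{Lem:Cni} of the coefficients, and the energy $E^n$ from \eqref{Def:En} with $e$ in place of $U$, the dissipative term telescopes exactly as in \eqref{Stability:SystemEq}, so that $E^n\le E^{n-1}+F^n$, where $F^n$ collects the two forcing contributions. The piece carrying $e_x^n$ is absorbed into the leftover dissipation $\tfrac{\alpha k}{\beta\Gamma(\beta)}t_n^{\alpha-1}\|e_x^n\|^2$ by Young's inequality, while the piece $(\rho_1^n,e^n)$ is kept linear in $\|e^n\|$. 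Summing over $n$, using $E^0=0$ together with $\|e^n\|^2\le E^n$, and invoking the discrete Gr\"onwall inequality of Lemma \ref{LemmaEstimate} to dispose of the resulting self-coupling, I would arrive at a bound of the form
\begin{equation*}
\max_{1\le m\le n}\|e^m\|^2\le C\,\biggl(\sum_{j=1}^n\frac{k}{t_j}\,\|\rho_2^j\|^2+\Bigl(\sum_{j=1}^n k\,\|\rho_1^j\|\Bigr)\max_{1\le m\le n}\|e^m\|\biggr),
\end{equation*}
from which $\max_m\|e^m\|$ is recovered by absorbing the last factor. The first time step $n=1$ has to be handled separately, exactly as in the stability proof, because that is where the temporal singularity is strongest.

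The decisive work is then to bound the two accumulated sums under the singular regularity $\|u_{tt}\|+\|u_{ttx}\|\le C t^{\alpha-2}$ assumed in the statement. For $\rho_1$ I would use the integral form of the Taylor remainder, giving $\|\rho_1^j\|\le C\int_{t_{j-1}}^{t_j}s^{\alpha-2}\,ds$ for $j\ge2$ and $\|\rho_1^1\|\le Ck^{\alpha-1}$; since $\sum_j j^{\alpha-2}$ converges, one gets $\sum_j k\|\rho_1^j\|\le Ck^{\alpha}$, contributing only the base rate $k^\alpha$. For $\rho_2$ I would insert the estimate of Proposition \ref{PropSingular}, namely $\|\rho_2^j\|\le C t_j^{\alpha}\bigl(j^{-\min\{\alpha/\beta+\alpha,1\}}+j^{-\max\{\beta,\alpha/\beta+\alpha-1\}}\bigr)$, and substitute $t_j=jk$, reducing the weighted sum to a pure power series in $j$ whose decisive exponent is dictated by the critical quantity $\max\{\beta,\alpha/\beta+\alpha-1\}$.

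The crux, and the main obstacle, is precisely this exponent. A short computation shows that $\max\{\beta,\alpha/\beta+\alpha-1\}\ge\alpha$ always, with equality \emph{iff} $\alpha=\beta$. Hence on the diagonal the governing series degenerates, up to constants, into the harmonic series $\sum_j j^{-1}\sim -\log k$, which after taking the square root yields the rate $k^{\alpha}(-\log k)^{1/2}$; off the diagonal the same series is summable but becomes near-critical as $\alpha\to\beta$, so to keep the estimate uniform one absorbs its (constant-blowing-up) tail into an arbitrarily small loss $k^{-\mu/2}$, producing $k^{\alpha-\mu/2}$. I expect the delicate points to be, first, ensuring that the temporal weight $t_j^{\alpha-1}$ and the factor $t_j^{\alpha}$ from Proposition \ref{PropSingular} combine so that every partial sum stays finite as $t\to0$, where the singularity concentrates the error, and second, verifying that the Gr\"onwall step does not spoil the exponent; these are the technical heart of the argument.
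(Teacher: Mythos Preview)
Your outline follows the paper's proof closely: derive the error equation for $e^n=U^n-u(t_n)$, test with $\chi=e^n$, replay the stability energy argument to telescope, absorb the $(\rho_2^n,e_x^n)$ forcing into the leftover dissipation $\frac{\alpha k}{\beta\Gamma(\beta)}t_n^{\alpha-1}\|e_x^n\|^2$ via Young, apply Lemma~\ref{LemmaEstimate}, and invoke Proposition~\ref{PropSingular} to identify the governing exponent $1+2\max\{\beta,\alpha/\beta+\alpha-1\}-2\alpha$. Your observation that this exponent equals~$1$ exactly when $\alpha=\beta$, degenerating the series to harmonic and producing the $(-\log k)^{1/2}$ factor, is precisely the mechanism in the paper.

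The one genuine gap is your attribution of the $k^{-\mu/2}$ loss. You ascribe it to keeping constants uniform as $\alpha\to\beta$, but in the paper the loss comes entirely from the first step $n=1$, and for a structural reason you have not identified. At $n=1$ the discrete operator $G$ contains $\overline{\partial}L\phi(t_1)=(L\phi(t_1)-L\phi(t_0))/k$, and $L\phi(t_0)$ is an empty sum, hence zero, whereas $I\phi(0)=\phi(0)/\Gamma(\beta+1)\neq0$. Consequently $\|\rho_2^1\|$ is of order~$1$, not $O(k^\alpha)$; Proposition~\ref{PropSingular} does \emph{not} cover $n=1$, and your accumulated sum would pick up an $O(1)$ contribution there. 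The paper instead estimates $n=1$ directly, obtaining at first only $F^1\le Ck^\alpha$, and then integrates by parts $(u_{0,x},e_x^1)=-(u_{0,xx},e^1)$---this is where the hypothesis $u_0\in H^2(\mathbb{R})$ enters---to bootstrap $F^1\le Ck^\alpha\Rightarrow Ck^{3\alpha/2}\Rightarrow\cdots\Rightarrow Ck^{2\alpha-\mu}$. The iteration does not terminate in finitely many steps, whence the residual~$\mu$. Your remark that $n=1$ is handled ``exactly as in the stability proof'' underestimates this: the stability argument at $n=1$ has no consistency term and closes cleanly, whereas the convergence argument requires this bootstrap.

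A small correction: the weight in your $\rho_2$ sum should be $k\,t_j^{\alpha-1}$ rather than $k/t_j$, since that is the coefficient of the dissipation you absorb into. The two differ by a bounded factor $t_j^\alpha\le T^\alpha$, so your exponent count survives, but the correct weight is what actually emerges from Young's inequality.
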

\begin{proof}
We substitute $e^n=U^n-u(t_n)$ into \eqref{TimeDiscreteEq} to get
\begin{equation}
         (\overline{\partial}e^n,\chi)=-\frac{\alpha}{\beta}t_n^{\alpha-1} (G_{\alpha/\beta}^{\beta-1,1-\beta} e_{x}^n,\chi_x)+(\kappa_1,\chi)+\frac{\alpha}{\beta}t_n^{\alpha-1}(\kappa_2,\chi_x),
\end{equation}
where
\begin{align*}
    \kappa_1& =\overline{\partial}u(t_n)-u_t(t_n),\\
    \kappa_2 &= D_{\alpha/\beta}^{\beta-1,1-\beta}u_x(t_n)-G_{\alpha/\beta}^{\beta-1,1-\beta} u_x(t_n),
\end{align*}
and we make  use a fact that $u$ satisfies eq. \eqref{Eq:WeakForm}. 
For the clarity of the main result we assume that for $n=0$ we have $e^0=U^0-u(0)=0$. Let us now choose $\chi=e^n$ and obtain
\begin{equation}
   \frac{1}{2} \overline{\partial}\|e^n\|^2\le-\frac{\alpha}{\beta}t_n^{\alpha-1} (G_{\alpha/\beta}^{\beta-1,1-\beta} e_{x}^n,e_x^n)+(\kappa_1,e^n)+\frac{\alpha}{\beta}t_n^{\alpha-1}(\kappa_2,e_x^n).
\end{equation}
Next, we follow the same steps as before, in the stability analysis, to get 
\begin{equation}
    \begin{split}
        \|e^n\|^2 \le & \|e^{n-1}\|^2 + k\frac{\alpha}{\beta}t_n^{\alpha-1} \biggl(\sum\limits_{i=1}^{n-1} \biggl(\frac{\beta}{\alpha}\frac{t_n}{k}(c_{n-1,i}-c_{n,i})-\beta c_{n,i}\biggr) (\|e_x^i\|^2+ \|e_x^n\|^2)\biggr)\\
        \ & -2 k \frac{\alpha}{\beta} t_n^{\alpha-1} (\beta c_{n,n}+\frac{\beta}{\alpha}\frac{t_n}{k} c_{n,n}) \|e_x^n\|^2 +2 k(\kappa_1,e^n)+2 k\frac{\alpha}{\beta}t_n^{\alpha-1}(\kappa_2,e_x^n), \\
         \|e^n\|^2 + & t_n^\alpha\sum\limits_{i=1}^{n} c_{n,i} \|e_x^i\|^2\le  \|e^{n-1}\|^2 + t_n^{\alpha} \sum\limits_{i=1}^{n-1} c_{n-1,i} \|e_x^i\|^2-k \alpha t_n^{\alpha-1}\sum\limits_{i=1}^{n} c_{n,i} \|e_x^i\|^2\\
         \ & -k \frac{\alpha}{\beta\Gamma(\beta)} t_n^{\alpha-1} \|e_x^n\|^2
         +2 k(\kappa_1,e^n)+2 k\frac{\alpha}{\beta}t_n^{\alpha-1}(\kappa_2,e_x^n).
         \end{split}\label{FewIneq}
\end{equation}
Now, with a special care, let us investigate the components with $\kappa_1$ and $\kappa_2$. It is easy to see that
\begin{equation}
    \|\kappa_1\| \le\|\overline{\partial}u(t_n)-u_t(t_n)\|\le  \int\limits_{t_{n-1}}^{t_n}\|u_{t t}(t)\|\, d t,
 \end{equation}
 and thus 
 \begin{equation}
 \begin{split}
    k(\kappa_1,e^n)\le k\|\kappa_1\|\cdot \|e^n\|\le & \frac{1}{2}k t_n^{1-\mu} T\|\kappa_1\|^2+\frac{1}{2}\frac{k t_n^{\mu-1}}{T}\|e^n\|^2\le  C k^2  t_n^{1-\mu} \int\limits_{t_{n-1}}^{t_n}\|u_{t t}(t)\|^2\, d t +  \frac{1}{2}\frac{k t_n^{\mu-1}}{T}\|e^n\|^2\\
    \le & C k^2 t_n^{1-\mu} \int\limits_{t_{n-1}}^{t_n}t^{2\alpha-4}\, d t +  \frac{1}{2}\frac{k t_n^{\mu-1}}{T}\|e^n\|^2\\
    \le & C k^2 t_n^{1-\mu} \frac{(n k)^{2\alpha-3}-((n-1)k)^{2 \alpha-3}}{2 \alpha-3}+ \frac{1}{2} \frac{k t_n^{\mu-1}}{T}\|e^n\|^2\\
    \le & C k^{2\alpha-\mu}  n^{2\alpha-\mu-3} +  \frac{1}{2}\frac{k t_n^{\mu-1}}{T}\|e^n\|^2,
    \end{split}
 \end{equation}
 where again $\mu$ is some arbitrary small positive constant.
 For the component with $\kappa_2$ let us note that we can write
  \begin{equation}
 \begin{split}
     \frac{\alpha}{\beta}k t_n^{\alpha-1}(\kappa_2,e_x^n)= &  \alpha k t_n^{\alpha-1}( L_{\alpha/\beta}^{0,\beta} u_x(t_n)-I_{\alpha/\beta}^{0,\beta} u_x(t_n),e_x^n)\\
     \ & + k t_n^{\alpha-1}(\frac{t_n}{k}(L_{\alpha/\beta}^{0,\beta} u_x(t_n)-L_{\alpha/\beta}^{0,\beta} u_x(t_{n-1}))-t_n\frac{d}{d t} I_{\alpha/\beta}^{0,\beta} u_x(t_n),e_x^n).
     \end{split}
 \end{equation}
 Then, using Proposition \ref{PropSingular} we have
 \begin{equation}
 \begin{split}
      \alpha k t_n^{\alpha-1}( L_{\alpha/\beta}^{0,\beta} u_x(t_n)-I_{\alpha/\beta}^{0,\beta} u_x(t_n),e_x^n)\le & \alpha k t_n^{\alpha-1}\| L_{\alpha/\beta}^{0,\beta} u_x(t_n)-I_{\alpha/\beta}^{0,\beta} u_x(t_n)\|\cdot\|e_x^n\|\\
      \le & C k t_n^{\alpha-1} \frac{t_n^\alpha}{n^{\min\{\alpha/\beta+\alpha,1\}}} \cdot\|e_x^n\|\le C k^{2 \alpha} \frac{t_n^\alpha}{n^{\min\{1+2\alpha/\beta,3-2\alpha\}}}+ \varepsilon_1 k t_n^{\alpha-1}\|e_x^n\|^2 ,
      \end{split}
 \end{equation}
 and
 \begin{equation}
 \begin{split}
      k t_n^{\alpha-1} (t_n\overline{\partial} L_{\alpha/\beta}^{0,\beta} u_x(t_n) -  t_n \overline{\partial} I_{\alpha/\beta}^{0,\beta} u_x(t_n),e_x^n) \le &  k t_n^{\alpha-1} \|t_n\overline{\partial} L_{\alpha/\beta}^{0,\beta} u_x(t_n) -  t_n \overline{\partial} I_{\alpha/\beta}^{0,\beta} u_x(t_n)\|\cdot\|e_x^n\|\\
      \le & C k t_n^{\alpha-1} \frac{t_n^\alpha}{n^{\max \{\beta,\alpha/\beta+\alpha-1\}}} \cdot \|e_x^n\|\\
      \le & C k^{2\alpha} \frac{t_n^\alpha}{n^{1+2{\max \{\beta,\alpha/\beta+\alpha-1\}}-2\alpha}}+k t_n^{\alpha-1} \varepsilon_2 \|e_x^n\|^2, 
      \end{split}
 \end{equation}
 and
 \begin{equation}
 \begin{split}
    k t_n^{\alpha-1} ( t_n\frac{1}{k}(I_{\alpha/\beta}^{0,\beta} u_x(t_n)- I_{\alpha/\beta}^{0,\beta} u_x(t_{n-1}))-t_n\frac{d}{d t} I_{\alpha/\beta}^{0,\beta} u_x(t_{n})),e_x^n)\le & C k t_n^{\alpha-1} \frac{t_n^{2\alpha}}{n^2} + \varepsilon_3 k t_n^{\alpha-1}\|e_x^n\|^2\\
    \le & C k^{2\alpha}  \frac{t_n^{\alpha}}{n^{3-2\alpha}} + \varepsilon_3 k t_n^{\alpha-1}\|e_x^n\|^2.
    \end{split}
 \end{equation}
 Next, similarly to the stability analysis we substitute
 \begin{equation}
     F^n=\|e^n\|^2 + \biggl(1+\frac{\alpha}{n}\biggr)t_n^\alpha\sum\limits_{i=1}^{n} c_{n,i} \|e_x^n\|^2,
 \end{equation}
 into last inequality in \eqref{FewIneq}, and obtain
 \begin{equation}
     \begin{split}
        F^n\le& F^{n-1} + \biggl(t_n^{\alpha}-\biggl(1+\frac{\alpha}{n-1}\biggr)t_{n-1}^{\alpha}\biggr) \sum\limits_{i=1}^{n-1} c_{n-1,i} \|e_x^i\|^2\\
         \ & -k \frac{\alpha}{\beta\Gamma(\beta)} t_n^{\alpha-1} \|e_x^n\|^2
         +2 k(\kappa_1,e^n)+2 k\frac{\alpha}{\beta}t_n^{\alpha-1}(\kappa_2,e_x^n)\\
         \le& F^{n-1} 
          -k \frac{\alpha}{\beta\Gamma(\beta)} t_n^{\alpha-1} \|e_x^n\|^2
          +C k^{2\alpha-\mu}  \frac{1}{n^{3-2\alpha+\mu}} + \frac{k t_n^{\mu-1}}{T}\|e^n\|^2 +C k^{2 \alpha} \frac{t_n^\alpha}{n^{\min\{1+2\alpha/\beta,3-2\alpha\}}}\\
          \ & + \varepsilon_1 k t_n^{\alpha-1}\|e_x^n\|^2 +C k^{2\alpha} \frac{t_n^\alpha}{n^{1+{2\max \{\beta,\alpha/\beta+\alpha-1\}}-2\alpha}}+k t_n^{\alpha-1} \varepsilon_2 \|e_x^n\|^2+C k^{2\alpha}  \frac{t_n^{\alpha}}{n^{3-2\alpha}} + \varepsilon_3 k t_n^{\alpha-1}\|e_x^n\|^2,
     \end{split}\label{FIneq}
 \end{equation}
 where we used that fact that for arbitrary $n\ge 2$ and $\alpha\in (0,1)$ we have
 \begin{equation}
   n^\alpha -(n-1)^\alpha \biggl(1+\frac{\alpha}{n-1}\biggr)<0.
 \end{equation}
 Now, because $\varepsilon_i,\ i=1,2,3$, are arbitrary positive constant we can assume that they satisfy
  \begin{equation}
     \varepsilon_1+ \varepsilon_2 +  \varepsilon_3 -\frac{\alpha}{\beta\Gamma(\beta)}<0.
 \end{equation}
 Therefore, using above we rewrite inequality \eqref{FIneq} in the following more compact form
 \begin{equation}
     \begin{split}
        F^n\le&   F^{n-1}+
           \frac{k t_n^{\mu-1}}{T} F^n+
          C k^{2\alpha-\mu}  \frac{1}{n^{3-2\alpha+\mu}}+C k^{2\alpha}\frac{1}{n^{1+2\alpha/\beta}}   +C k^{2\alpha} \frac{1}{n^{1+{2\max \{\beta,\alpha/\beta+\alpha-1\}}-2\alpha}}+C k^{2\alpha}  \frac{1}{n^{3-2\alpha}}.
     \end{split}
 \end{equation}
 Summing over the above formula from $2$ to $n$ we get
 \begin{equation}
   \begin{split}\label{Ineq:F_Convergence}
        F^n\le&   F^1+k\sum\limits_{i=2}^{n} \frac{t_n^{\mu-1}}{T}F^i+C k^{2\alpha-\mu}\sum\limits_{i=2}^{n} \biggl[\frac{1}{n^{3-2\alpha}}+\frac{1}{n^{1+2\alpha/\beta}}+\frac{1}{n^{1+2\max \{\beta,\alpha/\beta+\alpha-1\}-2\alpha}}\biggr].
     \end{split}  
 \end{equation}
 Let us now separately consider the first inequality in \eqref{FewIneq}  with $n=1$,
 \begin{equation}
     \begin{split}
     \|e^1\|^2 \le & \|e^{0}\|^2 
         -2 k \frac{\alpha}{\beta} t_1^{\alpha-1} (\beta c_{1,1}+\frac{\beta}{\alpha}\frac{t_1}{k} c_{1,1}) \|e_x^1\|^2 +2 k(\kappa_1,e^1)+2 k\frac{\alpha}{\beta}t_1^{\alpha-1}(\kappa_2,e_x^1)\\
         \le & \|e^{0}\|^2 
         -2 t_1^{\alpha} (\alpha +1)c_{1,1} \|e_x^1\|^2 +2 k(\kappa_1,e^1)+2 k\frac{\alpha}{\beta}t_1^{\alpha-1}(\kappa_2,e_x^1).
         \end{split}
         \end{equation}
         Note that we can estimate the terms with $\kappa_1$ and the differential part in $\kappa_2$ in the following way
         \begin{equation}
         \begin{split}
             k(\kappa_1,e^1)= & ( u(t_1)-u(0)-k u_t,e^1)\le \|\int\limits_{0}^k t u_{t t}\, d t\| \cdot \|e^1\|\le \int\limits_{0}^k t \|u_{t t}\|\, d t \cdot \|e^1\|\\
             \le & \frac{1}{4 \varepsilon^*}\biggl(\int\limits_{0}^k t\| u_{t t}\|\, d t\biggr)^2+ \varepsilon^* \|e^1\|^2\le \frac{1}{4 \varepsilon^*} \frac{1}{\alpha^2}k^{2 \alpha}+\varepsilon^* \|e^1\|^2.  
             \end{split}
         \end{equation}
         and
         \begin{equation}
         \begin{aligned}
               t_1^{\alpha}(I^{0,\beta}_{\alpha/\beta}u_{x}(t_1)-I^{0,\beta}_{\alpha/\beta}u_{x}(0)- k \frac{d}{d t}I^{0,\beta}_{\alpha/\beta}u_{x}(t_1),e_x^1)\le & C k^{\alpha} \biggl(\int\limits_{0}^k t \|\frac{d^2}{d t^2}I^{0,\beta}_{\alpha/\beta}u_{x}(t)\| \, d t\biggr)^2+   k t_1^{\alpha-1}\varepsilon_3 \|e_x^1\|^2\\
               \le & C k^{2\alpha} +  k t_1^{\alpha-1}\varepsilon_1\|e_x^1\|^2,
               \end{aligned}
               \end{equation}
               \begin{equation}
                  \begin{aligned}
            \frac{\alpha}{\Gamma(\beta+1)} k t_1^{\alpha-1} (u_{x}(t_1)-\Gamma(\beta+1)I^{0,\beta}_{\alpha/\beta}u_{x}(t_1),e_x^1)&\le C t_1^{3\alpha}+ \varepsilon_2 t_1^\alpha \|e_x^1\|^2,
            \end{aligned}
            \end{equation}
            \begin{equation}
                \begin{aligned}
            t_1^\alpha (c_{1,1}u_x(t_1)-I_{\alpha/\beta}^{0,\beta} u_x(t_1)+I_{\alpha/\beta}^{0,\beta} u_x(0),e_x^1)\le & C t_1^{3\alpha} + C t_1^\alpha \|u_{0,x}\|^2 + \varepsilon_1 \|e_x^1\|^2 \le C t_1^{3\alpha} + C t_1^\alpha + \varepsilon_1 \|e_x^1\|^2,
             \end{aligned}
         \end{equation}
         which together gives
         \begin{equation}
         \begin{split}
             k t_1^{\alpha-1}(\kappa_2,e_x^1) \le& C( k^{3\alpha} + k^{2\alpha}+k^\alpha) +   k^\alpha(\varepsilon_1+\varepsilon_2+\varepsilon_3) \|e_x^1\|^2,
            \end{split}
         \end{equation}
         and
         \begin{equation}
             k(\kappa_1,e^1)\le \frac{C}{\varepsilon^*}k^{2\alpha} +\varepsilon^*\|e^1\|^2,
         \end{equation}
         where $\varepsilon^*$ is some positive constant satisfying $2\varepsilon^*<1$  Hence, thanks to above estimations we get
         \begin{equation}
             \begin{split}
        \|e^1\|^2 + (1+\alpha)t_1^\alpha c_{1,1} \|e_x^1\|^2 \le &  -k \frac{\alpha}{\beta\Gamma(\beta)} t_1^{\alpha-1} \|e_x^1\|^2
         +C k^{2\alpha}+   \varepsilon^* \|e^1\|^2 +  C k^{3\alpha}\\
         \ & +C k^{2\alpha}+C k^\alpha+k^\alpha(\varepsilon_1+\varepsilon_2+t_1\varepsilon_3) \|e_x^1\|^2\\
          \le & C k^{\alpha} +C k^{2\alpha}+   \varepsilon^* \|e^1\|^2,
     \end{split}
 \end{equation}
 where we again choose $\varepsilon_1,\, \varepsilon_2, \, \varepsilon_3$ such that $\varepsilon_1+\varepsilon_2+\varepsilon_3-\alpha/\beta\Gamma(\beta)<0$.
 Hence, we get
 \begin{equation}
 \begin{split}
     F^1\le & \frac{C}{1-2 \varepsilon^*} k^{\alpha}.
     \end{split}
 \end{equation}
 what immediately implies $\|e^1\|\le C k^{\alpha/2}$. Let us note now that from the Sobolev Embedding Theory  we have $u(t)\in H^1(\mathbb{R})\subset C^{0,1/2}(\mathbb{R})$.  Moreover, $u(0)=u_{0}\in H^2(\mathbb{R})$ with $\lim_{x\to \pm \infty} |u_{0,x}(x)|=0$. Thus we have
 \begin{equation}
     |(u_x(0),e_x^1)|=|(u_{x x}(0),e^1)|\le \|u_{x x}(0)\|\cdot\|e^1\|\le C k^{\alpha/2},
 \end{equation}
 and thus
\begin{equation}
         \begin{split}
             k^{\alpha}(\kappa_2,e_x^1) \le& C( k^{3\alpha} + k^{2\alpha}+k^{3\alpha/2}) +   k^\alpha(\varepsilon_1+\varepsilon_2+\varepsilon_3) \|e_x^1\|^2.
            \end{split}
         \end{equation}
         Choosing again appropriate values for $\varepsilon_i,\ i\in \{1,2,3\}$, we get $F^1\le C k^{3\alpha/2}$. Repeating above procedure finitely many times we get $F^1\le C k^{2\alpha-\mu}$, where $\mu$ is arbitrarily small.
 Going back to inequality \eqref{Ineq:F_Convergence} for $n\ge 2$, we use Lemma \ref{LemmaEstimate} with $\gamma_i=t_i^{\mu-1}/T$ to obtain
  \begin{equation}
     \begin{split}
        F^n\le&   C k^{2\alpha-\mu}\sum\limits_{i=2}^{n} \biggl[\frac{1}{n^{3-2\alpha}}+\frac{1}{n^{1+2\alpha/\beta}}+\frac{1}{n^{1+2\max \{\beta,\alpha/\beta+\alpha-1\}-2\alpha}}\biggr]
        + C F^1.
     \end{split}
 \end{equation}
 Finally, using the asymptotic behaviour of the following series
 \begin{equation}
     \sum\limits_{i=1}^n\frac{1}{i^{2\nu+1-2 \alpha}}=\zeta{(1+2\nu-2 \alpha)}+\frac{1}{(2\alpha-2\nu)}\frac{1}{n^{2\nu-2\alpha}}+\frac{1}{2 n^{1+2\nu-2\alpha}}+\mathcal{O}\biggl(\frac{1}{n^{2+2\nu-2\alpha}}\biggr),\quad \mathrm{as}\quad n\to \infty,
 \end{equation}
 for $\nu\ne \alpha$, where $\zeta(x)$ is a Riemann zeta function, and
 \begin{equation}
    \sum\limits_{i=1}^n\frac{1}{i^{1}}= \biggl(\gamma -\log \biggl(\frac{1}{n}\biggr)\biggr)+\frac{1}{2 n} + \mathcal{O}\biggl(\frac{1}{n^2}\biggr),\quad \mathrm{as}\quad n\to \infty,
 \end{equation}
 for $\nu=\alpha$, where $\gamma$ is a Euler's constant, we get 
 \begin{equation}
     F^n \le \left\{\begin{array}{cl}
          C k^{2\alpha}+F^1 & \text{for}\quad \alpha\ne \beta,   \\
         C k^{2\alpha} (-\log k)+F^1 & \text{for}\quad \alpha=\beta.
     \end{array}\right.
 \end{equation}
 Hence, by the definition of $F^n$ we obtain the desired inequality
 \begin{equation}
 \begin{split}
     \|e^n\|\le \left\{\begin{array}{cl}
          C k^{\alpha}+(F^1)^{1/2} & \text{for}\quad \alpha\ne \beta,   \\
         C k^{\alpha} (-\log k)^{1/2}+(F^1)^{1/2} & \text{for}\quad \alpha=\beta.
     \end{array}\right.
     \end{split}
 \end{equation}
The proof is complete.
\end{proof}

\section{Numerical analysis}
\label{NumAnalysis}
In this section, we gather some numerical examples that support our previous results concerning the discretization of the Erd\'elyi--Kober differential and integration operators altogether with the numerical scheme for solving \eqref{Eq:WeakForm}.

In the beginning, let us consider the discrete operators $ L_{\alpha/\beta}^{0,\beta},\ K_{\alpha/\beta}^{\beta-1,1-\beta}$ $G_{\alpha/\beta}^{\beta-1,1-\beta}$. In the numerical demonstration of the discretization error the test functions $\phi(x)=x^{3/2}$ for the integral operator and $\psi(x) =x^2$ for differential operators were used. We chose these functions because, in general, for power functions one can easily calculate the exact value of analysed Erd\'elyi--Kober operators. In the Fig. \ref{fig:EK_Convergence} the discretization errors for the considered operators are depicted. In the doubly logarithmic scale, all graphs are straight lines as functions of the number of interval subdivisions. Therefore, it is reasonable to claim that this data support Theorem \ref{Thm:Order}.
\begin{figure}
    \centering
    \includegraphics{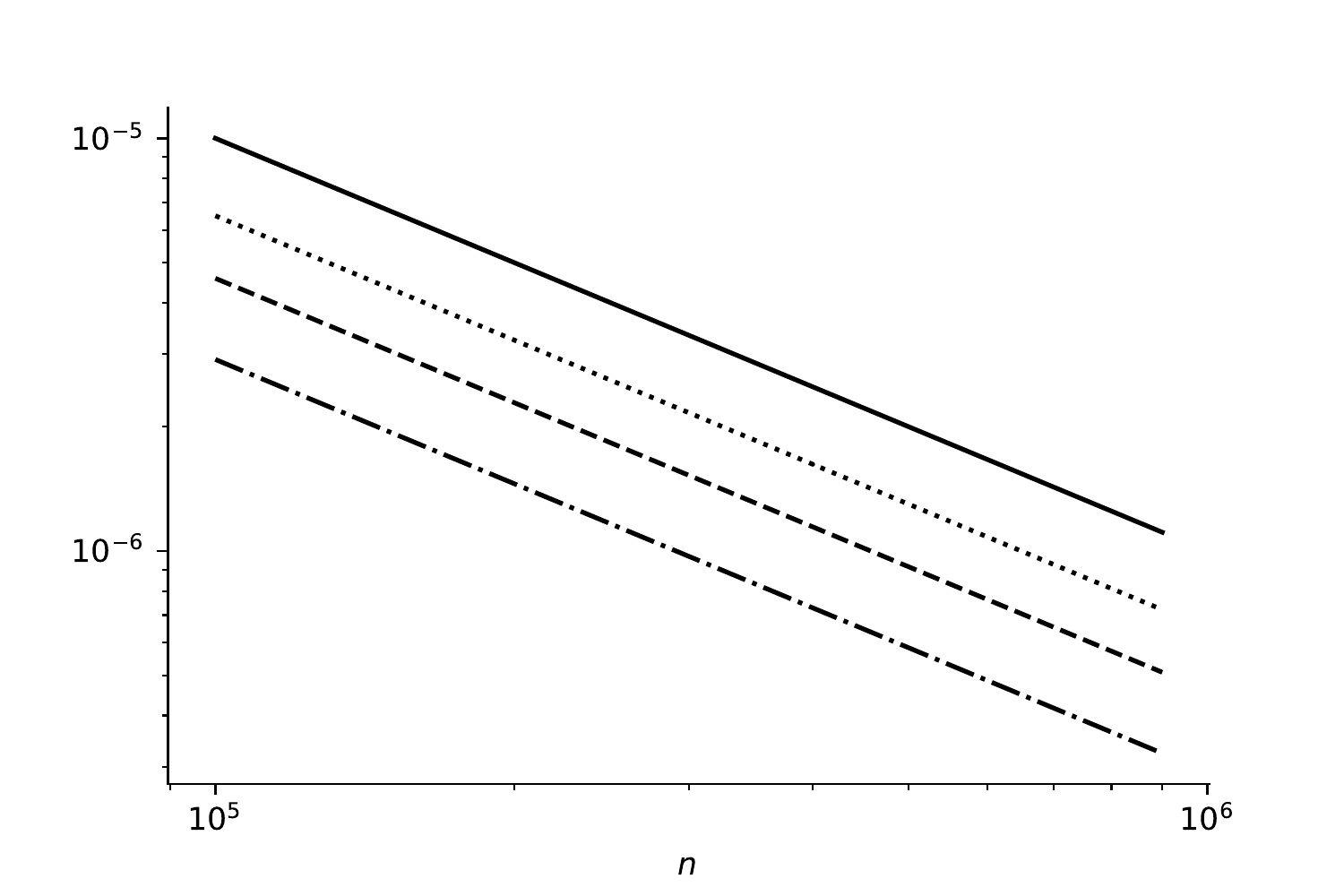}
    \caption{Numerical demonstration of the convergence order of the discretization error with respect to the number of divisions of the integration interval, depicted in the log-log scale. The solid line represent the reference graph of first order convergence. Errors related to $ L_{\alpha/\beta}^{0,\beta},\ K_{\alpha/\beta}^{\beta-1,1-\beta}$ $G_{\alpha/\beta}^{\beta-1,1-\beta}$ are presented as dashed, dotted and dash-dotted line, respectively. In this analysis the following parameters values were used: $\alpha=0.25,\ \beta=0.65$ and $x=1$.}
    \label{fig:EK_Convergence}
\end{figure}

To estimate the order of convergence for the other set of parameters, we use Aitken's extrapolation method (see \cite{linz1985}). According to it, the formula for order $p$ for the discrete operator $L_{\alpha/\beta}^{0,\beta}$, at point $t_n=1$, is given by
\begin{equation}
    p\approx \log_2\frac{\mathcal{A}_{2n}-\mathcal{A}_n}{\mathcal{A}_{4n}-\mathcal{A}_{2n}}, \quad \text{where} \quad \mathcal{A}_n := L_{\alpha/\beta}^{0,\beta}\phi(1) \text{ for } n \text{ grid points}. 
\end{equation}
Other operators are analysed analogously. This method of estimating the order of convergence was also used in \cite{plociniczak2017} where different methods of discretization of the Erd\'elyi--Kober fractional integral operator were investigated. Here, as a test function we use $\phi(t)=e^{t}$ and evaluate the error at $t=t_n=1$. Obtained results are presented in Table \ref{TableAitken}. It is easy to notice that all numerically determined orders of convergence are close to $1$, and moreover, when the number of partitions of interval $[0,1]$ increases, then the order $p$ is asymptotically approaching $1$. Hence, also for this method, the numerical results are in accordance with Theorem \ref{Thm:Order}.

\begin{table}
\begin{center}
{\renewcommand{\arraystretch}{1.8}
\begin{tabular}{ p{3cm} p{1.5cm} p{1.5cm}| p{1.5cm} p{1.5cm}}%
 \toprule
 \ & \multicolumn{2}{c}{$\alpha=0.7,\ \beta=0.15$} & \multicolumn{2}{c}{$\alpha=0.25,\ \beta=0.65$}\\
 \midrule
 & $n=10^5$   & $n=10^6$    & $n=10^5$   & $n=10^6$ \\
 \midrule
$L_{\alpha/\beta}^{0,\beta}\phi(x)$ & $0.97$   & $0.98$   &$1.0009$ &   $1.0004$\\
 $K_{\alpha/\beta}^{\beta-1,1-\beta} \phi(x)$& $0.90$   & $0.94$    & $0.9994$&   $0.9998$\\
 $G_{\alpha/\beta}^{\beta-1,1-\beta}\phi(x)$ & $0.88$   & $0.93$    & $1.0004$&   $1.0003$\\
 \bottomrule
\end{tabular}\label{TableAitken}
}
\end{center}
\caption{Estimated order of discretization errors.}
\end{table}

Now, we proceed to the analysis of the numerical scheme used to solve \eqref{Eq:WeakForm}. We discretize the first derivative with respect to time and Erd\'elyi--Kober fractional differential operator as in \eqref{TimeDiscreteEq}. Next, to approximate the exact solution in the spatial dimension, we use Galerkin--Hermite method. We use Hermite functions due to their rapid decay at the infinity and orthogonality in $L^2(\mathbb{R})$. To this end, following \cite{shen2011}, let us define Hermite polynomial $H_n(x),\ n\ge 0$. Using the Rodrigues' formula
\begin{equation}
    H_n(x)\coloneqq (-1)^n e^{x^2}\frac{d^n}{d x^n}e^{-x^2}, \quad x\in \mathbb{R}.
\end{equation}
Hermite polynomials are orthogonal in $L^2(\mathbb{R})$ with respect to the weight $w(x)=e^{-x^2}$. However, in our numerical scheme it is more appropriate to use Hermite functions $\widehat{H}_n(x)$
\begin{equation}
    \widehat{H}_n(x)=\pi^{-\frac{1}{4}} (2^n n!)^{-\frac{1}{2}} e^{-\frac{x^2}{2}} H_n(x).
\end{equation}
Let us notice the Hermite functions vanish exponentially as $x\to \pm \infty$ and are orthonormal in $L^2(\mathbb{R})$, that is
\begin{equation}
    \int_{-\infty}^{\infty} \widehat{H}_n(x)\widehat{H}_m(x)\, d x=\delta_{n m},
\end{equation}
and the inner product of the first-order derivatives satisfies \cite{shen2011},
\begin{equation}
    \int_{-\infty}^{\infty} \widehat{H}'_n(x)\widehat{H}'_m(x)\, d x=\widehat{\delta}_{n m}=\left\{
    \begin{array}{cc}
        -\frac{\sqrt{n(n-1)}}{2}, & m=n-2, \\
        n+\frac{1}{2}, &  n=m,\\
        -\frac{\sqrt{(n+2)(n+1)}}{2}, & m = n+2,\\
        0 , & \mathrm{otherwise},
    \end{array}\right.
\end{equation}
therefore the stiffness matrix is banded. Let $P_N$ be the space of polynomials of degree at most $N$ and 
\begin{equation}
    \widehat{P}_N\coloneqq \{ \widehat{H}_n(x)=\pi^{-\frac{1}{4}} (2^n n!)^{-\frac{1}{2}} e^{-\frac{x^2}{2}} H_n(x),\ H_n(x)\in P_N\}.
\end{equation}
Finally, we can rewrite the time-discrete problem \eqref{TimeDiscreteEq} in the following fully discrete form 
\begin{equation}\label{FullDiscreteEq}
   \biggl\{ \begin{array}{l}
          \mathrm{Find}\ U^n_N=U_N(t_n)\in \widehat{P}_N,  \quad \mathrm{such\ that}, \\
         (\overline{\partial}U^n_{N},\chi_N)=-\frac{\alpha}{\beta}t_n^{\alpha-1} (G_{\alpha/\beta}^{\beta-1,1-\beta} U^n_{N,x},\chi_{N,x}),\quad \forall \chi \in \widehat{P}_N,
    \end{array}\biggr.
\end{equation}
with $U^0_N=\widehat{\Pi}_N u(0,x)$, where $\widehat{\Pi}_N : L^2(\mathbb{R})\rightarrow \widehat{P}_N$ is orthogonal projection defined as \cite{shen2011}
\begin{equation}
    \widehat{\Pi}_N \coloneqq e^{-\frac{x^2}{2}}\Pi_N\bigl(u e^{\frac{x^2}{2}}\bigr),\quad u\in L^2(\mathbb{R}),
\end{equation}
and
\begin{equation}
    (u-\Pi_n u,v_n)_w=0,\quad \forall v_n\in P_n.
\end{equation}
It is clear that 
\begin{equation}
    \widehat{\Pi}_N u(x)=\sum\limits_{i=0}^N \gamma_i \widehat{H}_i, \quad \mathrm{where}\quad \gamma_i=\int\limits_{-\infty}^{+\infty} u(x) \widehat{H}_i(x)\, d x.
\end{equation}
Hence, by setting 
\begin{equation}
    U^n_N=\sum\limits_{i=0}^N\gamma_{n,i} \widehat{H}_i(x),
\end{equation}
and rewriting \eqref{FullDiscreteEq} $N+1$ times, where for the $k$th equation we choose $\chi_N = \widehat{H}_k(x)$, we can derive the system of algebraic equations
\begin{equation}
     A \widehat{\gamma}_n = B,\quad .
\end{equation}
where $\widehat{\gamma}_n=(\gamma_{n,0},\gamma_{n,1},\ldots,\gamma_{n,N})$ and
\begin{equation}
    A=[(\alpha k t_n^{\alpha-1}+t_n^{\alpha})c_{n,n}\widehat{\delta}_{i j}+\delta_{i j}],\quad B=[-t_n^{\alpha-1}\sum\limits_{l=1}^{n-1} (\alpha k c_{n,l}+ t_n(c_{n,l}-c_{n-1,l}))\langle \widehat{\gamma}_l,\widehat{\delta}_i\rangle]^T, 
\end{equation}
with $\widehat{\delta}_i=(\widehat{\delta}_{i 0},\widehat{\delta}_{i 1},\ldots,\widehat{\delta}_{i N})$, $i,j\in\{0,1,\ldots,N\}$ and $\langle\cdot,\cdot\rangle$ is a standard Euclidean dot product. Note, that to determine $\widehat{\gamma}_n$ we need to know the values for all  coefficients in previous time steps, i.e. $\widehat{\gamma}_k$, $k\in\{1,2,\ldots,n-1\}$. 

Next, let us notice that if we choose the initial condition $u(0,x)=e^{-x^2/2}$ we obtain $\widehat{\Pi}_N u(0,x)= \pi^{1/4}\widehat{H}_0$. Moreover, in the literature  devoted to the Erd\'elyi--Kober diffusion equation (see \cite{mura2008, mura2009, mura2008character, mura2008non,pagnini2012}), we can find the formula for the Green function corresponding to the eq. \eqref{Eq:Main} and \eqref{Eq:Mura},
\begin{equation}
    \mathcal{G}(t,x)=\frac{1}{2}\frac{1}{t^{\alpha/2}}M_{\beta/2}\biggl(\frac{|x|}{t^{\alpha/2}}\biggr),
\end{equation}
where $M_\mu(z)$ is the Mainardi function, also known as $M$-Wright function (for more details concerning Mainardi function see \cite{mainardi2010,gorenflo2007, gorenflo2000, mainardi2010Mura}), defined as 
\begin{equation}
\begin{split}
    M_\mu (z)=& \sum\limits_{n=0}^{\infty}\frac{(-z)^n}{n! \Gamma(-\mu n+(1+\mu)}\\
    = & \frac{1}{\pi}\sum\limits_{n=1}^{\infty}\frac{(-z)^{n-1}}{(n-1)!}\Gamma(\mu n) \sin (\pi \mu n),\quad 0<\mu<1.
    \end{split}
\end{equation}
For the special case, $\mu=1/2$, the Mainardi function can be rewritten in a more familiar form,
\begin{equation}
\label{Mainardi:Special}
    M_{1/2}(z)=\frac{1}{\sqrt{\pi}}e^{-z^2/4}.
\end{equation}
Using above for $\beta=1$ and $u(0,x)=\widehat{H}_0$ we are able to calculate the exact solution of \eqref{Eq:Main}, i.e.
\begin{equation}\label{ExactSolution}
    u(t,x)=\int\limits_{-\infty}^{\infty}\mathcal{G}(t,x-y) u(0,y)\, d y= \pi^{1/4}\frac{t^{-\frac{\alpha }{2}} e^{-\frac{x^2}{4 t^{\alpha }+2}}}{\sqrt{t^{-\alpha }+2}}.
\end{equation}
The above expression will be utilised to compare the analytical order of the method proved in Theorem \ref{Thm:EqOrder} with the data obtained in the numerical experiment. In Fig. \ref{fig:ErrorAnalysisLogLog} the errors of approximating the exact solution \eqref{ExactSolution} by $U_N^n$ for different values of $k$ are depicted in the doubly logarithmic scale. The solid line represents the power-law dependence derived analytically in Theorem \ref{Thm:EqOrder}. Using linear regression one can conclude that points representing the difference $\|\widehat{\Pi}_N u-U_N^n\|$, for different values of $n$, where $k=1/n$, are arranged in a line with a slope $0.58$. Note that the estimated order of convergence is equal to $\alpha=0.65$. This slight discrepancy is probably due to the slow temporal convergence of the method requiring a very small time step to fully resolve the error. However, we can conclude that the numerical estimate of the convergence error is in the right ballpark.

\begin{figure}
    \centering
     \includegraphics[width=0.7\textwidth]{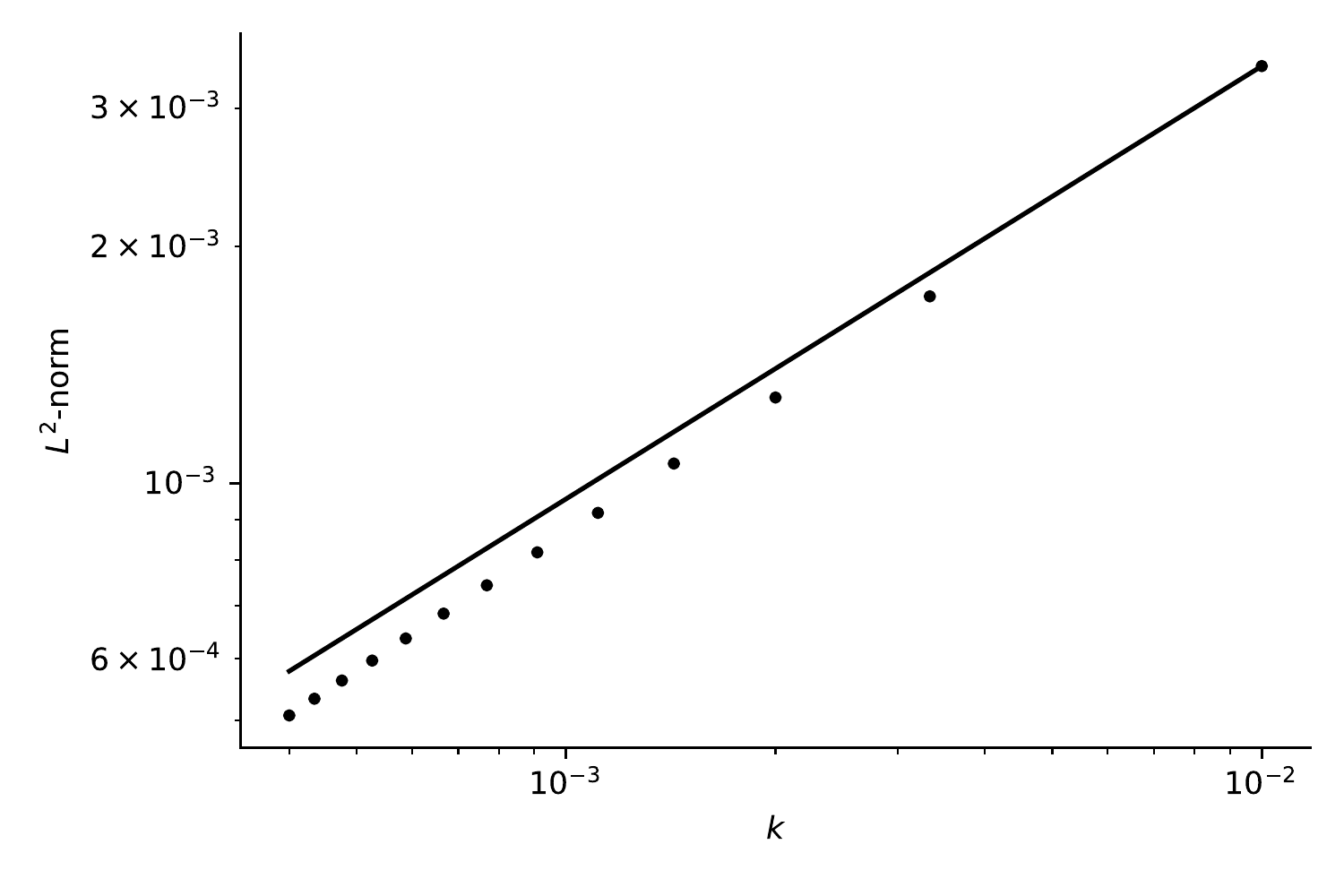}
    \caption{Numerical demonstration of order of convergence of $\|\widehat{\Pi}_N u-U_N^n\|$ (dots) with respect to time step $k$, depicted in log-log scale, where $u(t,x)$ is given by \eqref{ExactSolution} and $U_N^n$ is a solution of \eqref{FullDiscreteEq}.  The following parameters were used: $\alpha=0.65,\ \beta=1,\ t_n=1,\ N=30$. Solid line represent the linear function with slope equal $\alpha$.}
    \label{fig:ErrorAnalysisLogLog}
\end{figure}

To further investigate the error of approximation, we again can use the Aitken's method. In this case, to estimate the order of convergence with respect to the time step $k=t_n/n$ and for a fixed $N$ we calculate the ratio of the $L^2(\mathbb{R})$ norm of the appropriate difference, i.e.
\begin{equation}
    p\approx \log_2\frac{\|\mathcal{U}^n-\mathcal{U}^{2 n}\|}{\|\mathcal{U}^{2 n}-\mathcal{U}^{4 n}\|},
\end{equation}
where $\mathcal{U}^n\coloneqq U_N^n$ is a solution of \eqref{FullDiscreteEq} with $k=1/n$ and $t_n=1$. We use two initial condition functions: $u(0,x)=e^{-x^2/2}$ and $u(0,x)=1/(1+x^2)$ and compute the error at $t_n=1$. For the former, we use $N=5$ as the highest order of Hermite polynomial in orthogonal expansion, whereas for the latter choice of the initial condition we use $N=20$ to approximate the exact solution more accurately since its Hermite expansion is infinite. To calculate coefficients $\gamma_0$ for the latter initial condition, we use Gaussian quadrature rule. Obtained results are presented in Table \ref{TableAitkenEq}. For parameters $\alpha=0.7,\ \beta=0.15$ and both choices of the initial condition, we can see that the order of convergence quickly stabilizes near $\alpha$. However, in the case of smaller $\alpha$ obtained order estimates are in its slightly larger neighbourhood. In other words, the estimated order of convergence for a small value of $\alpha$ attains the analytically derived order more slowly. Nevertheless, in the case $\alpha<\beta$ the results presented in Table \ref{TableAitkenEq} agree with the order of convergence derived in Theorem \ref{Thm:EqOrder}.

\begin{table}
\begin{center}
{\renewcommand{\arraystretch}{1.8}
\begin{tabular}{ p{3cm} p{1.7cm} p{1.7cm}| p{1.7cm} p{1.7cm}}%
 \toprule
 \ & \multicolumn{2}{c}{$\alpha=0.7,\ \beta=0.15$} & \multicolumn{2}{c}{$\alpha=0.35,\ \beta=1$}\\
 \midrule
$u(0,x)$ & $n=500$   & $n=1000$    & $n=1000$   & $n=1500$ \\
 \midrule
$e^{-x^2/2}$ & \multicolumn{1}{c}{$0.645$}   & \multicolumn{1}{c|}{$0.652$}    &\multicolumn{1}{c}{$0.499$} &   \multicolumn{1}{c}{$0.435$}\\
 $\frac{1}{1+x^2} $& \multicolumn{1}{c}{$0.6878$}   & \multicolumn{1}{c|}{$0.6891$}& \multicolumn{1}{c}{$0.377$}&   \multicolumn{1}{c}{$0.301$}\\
 \bottomrule
\end{tabular}\label{TableAitkenEq}
}
\end{center}
\caption{Estimated temporal orders of the numerical scheme error calculated using Aitken's method.}
\end{table}

For estimation of the spatial discretization error, we again use the Gaussian initial condition $u(0,x)=e^{-x^2/2}$. The error $\| u(t_n,x)-U_N^n\|$ where $U_N^0=\Pi_N u(0,x)=\pi^{1/4} \widehat{H}(x)$ as a function of $N\in\{2,4,\ldots,24\}$ is depicted in Fig. \ref{fig:EK_Convergence_N}. For numerical convenience, we consider the error in $L^2(\mathbb{R})$ norm at point $t_n=1$ with $n=2000$. Let us notice that for the initial condition of the form $e^{x^2/2}$ with $ \beta=1$, the value of error in $L^2(\mathbb{R})$ norm can be easily obtained thanks to the simple formula for the Mainardi function \eqref{Mainardi:Special} and the fact that in this special case we are able to derive the exact form of the solution \eqref{ExactSolution}. For fixed $n$ the error decreases linearly on the semi-logarithmic scale indicating that the convergence is indeed spectral (exponential). After $N=20$ the error hardly changes saturating to the value determined by the temporal error. Increasing the number of time interval subdivisions $n$ would make this limit arbitrarily small. 
To make the error estimation independent on the temporal accuracy, instead of comparison with the exact solution, we can use a reference one. That is to say, we fix $n$ and numerically compute the solution for significantly larger number of Hermite terms in the expansion, say $N=60$. Then, for increasing number of $N$ we calculate the error $\|U_N^n-U_{60}^n\|$. Numerical results are presented in Fig. \ref{fig:EK_Convergence_N_num}. As we can see, the error is clearly linear on the semi-logarithmic scale indicating the spectral accuracy. There is no saturation of the error. 

\begin{figure}
    \centering
     \includegraphics[width=0.8\textwidth]{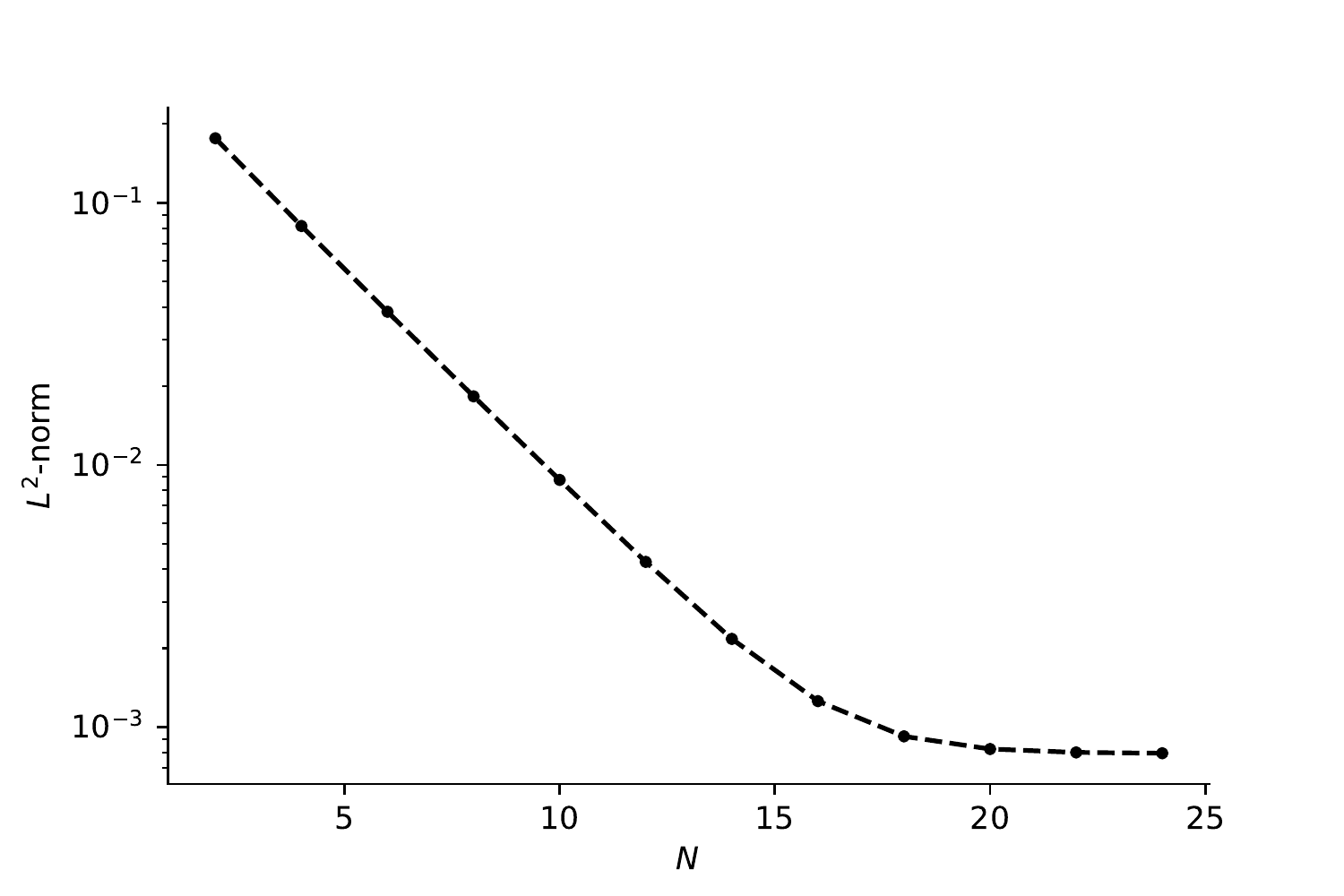}
    \caption{The error $\|U_N^n-u(t_n,x)\|$ with $\alpha=0.6,\ \beta=1$ and $ u(0,x)=e^{-x^2/2}$ calculated at point $t_n=1,\ n =2000$, with respect to number of Hermite functions in the expansion $N$. Note the semi-logarothmic scale }
    \label{fig:EK_Convergence_N}
\end{figure}
\begin{figure}
    \centering
     \includegraphics[width=0.8\textwidth]{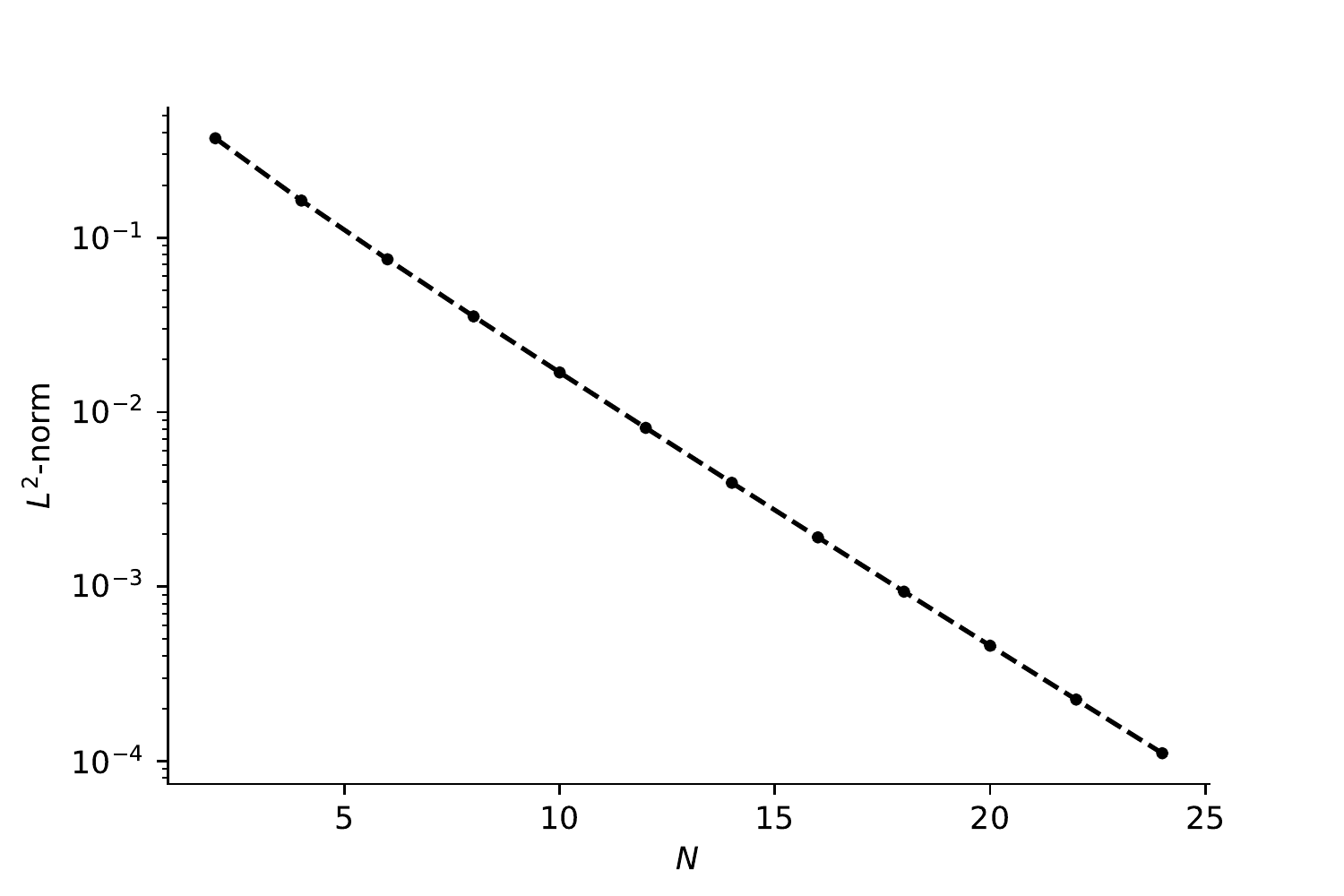}
    \caption{The error $\|U_N^n-U_{60}^n\|$ with $\alpha=0.6,\ \beta=1$ and $ u(0,x)=e^{-x^2/2}$ calculated at point $t_n=1,\ n =2000$, with respect to to number of Hermite functions in the expansion $N$. Note the semi-logarithmic scale. }
    \label{fig:EK_Convergence_N_num}
\end{figure}

\section{Conclusion}
\label{Conclusions}
The deterministic fractional diffusion equation describing the evolution of the marginal density function of particle dispersion of the generalized grey Brownian motion involves the Erd\'elyi--Kober fractional derivative. We proposed two discretization methods of this operator along with estimates of the error of approximation. Theoretical results were supported by numerical experiments. Furthermore, using the Galerkin--Hermite method, the numerical scheme for solving the Erd\'elyi--Kober fractional diffusion equation was proposed. For the semi-discrete problem with respect to time, we proved the stability and convergence. Due to the singular term in time present in the main equation, the error of the approximation is an order smaller than $1$. Resolving this issue, i.e., providing some higher order methods is one of the objectives of our future studies along with investigations concerning the existence, uniqueness, and regularity of the Erd\'elyi--Kober fractional diffusion equation. 

\section{Acknowledgments}
The research of M\'S was partially supported by NCN Sonata Bis Grant no. 2019/34/E/ST1/00360

\bibliography{biblio_erdelyi_kober}
\bibliographystyle{abbrvnat}
\end{document}